\documentclass[11pt,reqno]{amsart}

\usepackage[T1]{fontenc}
\usepackage{lmodern}
\usepackage{microtype}
\usepackage{mathtools,amssymb,aliascnt}
\usepackage[margin=1.12in]{geometry}
\usepackage[colorlinks=true,linkcolor=blue,citecolor=blue,urlcolor=blue]{hyperref}
\usepackage[capitalise,noabbrev]{cleveref}

\newtheorem{theorem}{Theorem}[section]

\newaliascnt{lemma}{theorem}
\newtheorem{lemma}[lemma]{Lemma}
\aliascntresetthe{lemma}
\crefname{lemma}{lemma}{lemmas}
\Crefname{lemma}{Lemma}{Lemmas}
\theoremstyle{definition}
\newtheorem*{definition}{Definition}
\newtheorem{problem}{Problem}
\crefname{problem}{problem}{problems}
\Crefname{problem}{Problem}{Problems}
\theoremstyle{plain}

\newcommand{\R}{\mathbb R}

\newcommand{\E}{\mathbb E}
\newcommand{\PP}{\mathbb P}
\DeclareMathOperator{\Var}{Var}
\DeclareMathOperator{\Lip}{Lip}
\DeclareMathOperator{\arsinh}{arsinh}
\numberwithin{equation}{section}
\title[Property $(H)$ of $c_0$ and Lipschitz problems]
{On Property (H) of $c_0$ and Lipschitz Problems of Gromov and Johnson}
\author{Lixin Cheng}
\thanks{The first author was Supported by the National Natural Science Foundation of China
(NSFC), Grant No.~12271453.}
\address{School of Mathematical Sciences, Xiamen University,
Xiamen 361005, China}
\email{lxcheng@xmu.edu.cn (L. Cheng)}

\author{Qingjin Cheng}
\thanks{The second author was Supported by the National Natural Science Foundation of China
(NSFC), Grant No.~12071389.}
\address{School of Mathematical Sciences, Xiamen University,
Xiamen 361005, China}
\email{qjcheng@xmu.edu.cn (Q. Cheng)}

\author{Yue Wang}
\address{School of Mathematical Sciences, Xiamen University,
Xiamen 361005, China}
\email{19020260158074@stu.xmu.edu.cn (Y. Wang)}

\date{}
\subjclass[2020]{Primary 46B85; Secondary 46B80, 55M25}
\keywords{Rational Property $(H)$,
 topological degree, Lipschitz maps, uniform continuity}
\hypersetup{
 pdftitle={On Property (H) of c0 and Lipschitz Problems of Gromov and Johnson},
 pdfauthor={L. Cheng, Q. Cheng and Y. Wang},
 pdfsubject={Nonzero degree, common moduli, and sharp Lipschitz bounds}
}

\begin{document}
\begin{abstract}
Let $X$ be a normed space and  $S_X$ be the unit sphere of $X$. We say that $X$ has Property (H) if there exist two increasing sequences $V_j\subset X$,\;$H_j\subset \ell_2, j=1,2,\cdots$  of finite dimensional subspaces  satisfying the union $V\equiv\bigcup_j V_j$ is dense in $X$ and there is a uniform continuous mapping $\psi: S_V\rightarrow S_{\ell_2}$ so that the restrictions $\psi|_{S_{V_j}}, j=1,2,\cdots$ are isomorphisms from $S_{V_j}$ to $S_{H_j}$. The Kasparov-Yu problem asks whether the Banach space $c_0$ of all null sequences admits Property (H), or rational Property (H). In this paper, we prove that $c_0$ admits neither Property (H) nor rational Property (H). We also obtain the following results.

\begin{enumerate}
\item[(1)]

The infimum $\mu_n$ of the Lipschitz constants of all
nonzero-degree maps from $S_{\ell_\infty^n}$ to $S_{\ell_2^n}$
satisfies
\[
\lim\limits_{n\to\infty}\frac{\mu_n}{\log n}=\frac12.
\]

\item[(2)]

Let $\lambda_n$ and $C_n$ denote the infima of the Lipschitz
constants of homeomorphisms from $S_{\ell_\infty^n}$ onto
$S_{\ell_2^n}$ and from $B_{\ell_\infty^n}$ onto $B_{\ell_2^n}$,
respectively. Then
\[
 \lim_{n\to\infty}\frac{\lambda_n}{\log n}
 =\lim_{n\to\infty}\frac{C_n}{\log n}=\frac12.
\]
\end{enumerate}

The first result solves a problem of Gromov on nonzero-degree maps between
finite-dimensional spheres, while the second settles Johnson's problem on
the Lipschitz distortion of finite-dimensional unit balls.
\end{abstract}

\maketitle

\section{Introduction and main results}

This paper answers three problems in the quantitative geometry of
finite-dimensional unit spheres and balls. Although they arise in
different forms, all three ask
how topological information on finite-dimensional spheres interacts
with quantitative metric control. We first give a negative answer to
the Kasparov--Yu problem for $c_0$. We then turn to the distinct but
closely related Lipschitz problems posed by Gromov and Johnson and
determine the sharp logarithmic asymptotics of the relevant Lipschitz
constants, including Gromov's stabilized formulation. A common
degree--Poincar\'e method underlies these results.

\subsection{The Kasparov--Yu problem}
Kasparov and Yu introduced Property~$(H)$ in their work on the Novikov
conjecture \cite{KY}. They proved that a countable discrete group
which coarsely embeds into a Banach space with this property satisfies
the strong Novikov conjecture. We refer also to \cite{CWY} for the
corresponding result on the coarse Novikov conjecture.

All Banach spaces in this paper are real. We write $B_X$ and $S_X$ for
the closed unit ball and unit sphere of a normed space $X$, endowed with
the norm distances inherited from $X$.  A \emph{paving} of $X$ is an increasing sequence
$(V_j)$ of finite-dimensional subspaces whose union is dense in $X$.

\begin{definition}[\cite{KY}]
An infinite-dimensional real Banach space $X$ has \emph{rational Property~$(H)$} if there exist
a paving $(V_j)$ of $X$ and a paving $(H_j)$ of $\ell_2$ with
$\dim V_j=\dim H_j\ge2$, and a uniformly continuous map $\psi:S_V\longrightarrow S_{\ell_2}$, where
$V=\bigcup_j V_j$, such that
$\psi|_{S_{V_j}}:S_{V_j}\longrightarrow S_{H_j}$ has nonzero degree
for every $j$.
\end{definition}

In the terminology of Kasparov and Yu, Property~$(H)$ requires each restriction to be a
homeomorphism onto $S_{H_j}$, or more generally, a map of degree one.

The classical Mazur maps give Property~$(H)$ for $\ell_p$, $1\le p<\infty$,
and their noncommutative counterparts give it for Schatten classes
\cite[pp.~1859--1860]{KY}. Further examples were obtained using sphere
geometry and Banach lattice methods; see \cite{ChengH,CW}.
Every countable discrete group, equipped with a proper left-invariant
metric, coarsely embeds into $c_0$ by Aharoni's theorem \cite{Aharoni}. Kasparov and Yu asked the following questions; see also
\cite[Problem~1.1]{BGLMPS} for the Property~$(H)$ question.

\begin{problem}[Kasparov and Yu {\cite[p.~1861]{KY}}]\label{prob:KY}
Does $c_0$ have Property~$(H)$? Does it have rational Property~$(H)$?
\end{problem}

Our first result answers both questions negatively.

\begin{theorem}\label{thm:main}
The space $c_0$ has neither rational Property~$(H)$ nor Property~$(H)$.
\end{theorem}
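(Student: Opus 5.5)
Since Property~$(H)$ (each restriction a homeomorphism onto $S_{H_j}$, or a degree-one map) implies rational Property~$(H)$, it suffices to rule out the rational version. Suppose $(V_j)$, $(H_j)$ and a uniformly continuous $\psi:S_V\to S_{\ell_2}$ are as in the definition. Let $\omega$ be the modulus of continuity of $\psi$. Because $\psi$ maps into the unit sphere, $\omega(t)\le 2$ for all $t$, and uniform continuity gives $\omega(t)\to0$ as $t\to0$. A standard consequence is that $\psi$ is coarsely Lipschitz: $\|\psi(x)-\psi(y)\|\le K(1+\|x-y\|/\delta)$. At small scales this becomes a genuine Lipschitz bound after smoothing. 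The strategy is to convert this uniform control into maps of nonzero degree from $S_{\ell_\infty^n}$ to $S_{\ell_2^n}$ with Lipschitz constant bounded independently of $n$. This would contradict the lower bound $\mu_n\ge(\tfrac12-o(1))\log n\to\infty$ stated in the abstract, which the paper's degree--Poincar\'e method establishes.

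\emph{Step 1 (finding $\ell_\infty^n$ inside $V_j$).} Fix $n$ and $\varepsilon>0$. Since $V$ is dense in $c_0$ and the $V_j$ increase, perturbing the first $n$ unit vectors shows that some $V_j$ contains an $n$-dimensional subspace $E$ that is $(1+\varepsilon)$-isomorphic to $\ell_\infty^n$. Moreover $V_j$ itself is a finite-dimensional subspace of $c_0$, so it is almost isometric to a subspace of $\ell_\infty^m$. Using the lattice structure, one would like $V_j$ to be close to $\ell_\infty^{m}$ with $m=\dim V_j$. That is not automatic, so I would work directly with the degree on $S_{V_j}$.

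\emph{Step 2 (transfer the degree to a cube-like sphere).} Let $d=\dim V_j$. Since $V_j\subset c_0$ is finite-dimensional, projecting it onto finitely many coordinates gives a $(1+\varepsilon)$-embedding $T:V_j\to\ell_\infty^N$. The sphere $S_{V_j}$ is a Lipschitz-retract-friendly piece of $S_{\ell_\infty^N}$, but the degree is defined on $S_{V_j}$ itself. So the cleaner route is to apply the degree--Poincar\'e inequality intrinsically to $S_{V_j}$. The paper's argument for $\mu_n$ presumably rests on two ingredients: an averaging (Poincar\'e-type) inequality showing that a nonzero-degree map must have large average oscillation across some set of pairs, and a concentration property of the $\ell_\infty$ geometry giving pairs that are $\ell_\infty$-close but whose images must be far apart. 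I would rerun this argument with $\ell_\infty^n$ replaced by a $d$-dimensional subspace of $c_0$. The key input is that for $d$ large every $d$-dimensional subspace of $c_0$ still contains many nearly-$\ell_\infty$ directions, in the Bourgain--Tzafriri or Dvoretzky-type sense for $\ell_\infty$. This forces $\omega(t)\gtrsim t\log d$ for $t$ of order $1/\log d$.

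\emph{Step 3 (contradiction).} Choosing $t\approx c/\log d$ gives $\omega(c/\log d)\ge c'>0$ for all $j$. Since $\dim V_j\to\infty$, the scale $c/\log d$ tends to $0$ while $\omega$ stays bounded below, contradicting uniform continuity. I expect Step 2 to be the main obstacle: the lower bound is stated for the exact cube $S_{\ell_\infty^n}$, whereas $V_j$ is an arbitrary finite-dimensional subspace of $c_0$. To handle this, I would first try to choose the paving-compatible subspaces so that, up to $(1+\varepsilon)$ distortion, the relevant $S_{V_j}$ is a cube. Where that fails, I would prove that the degree--Poincar\'e lower bound depends only on $V_j$ embedding almost isometrically into some $\ell_\infty^N$ together with $\dim V_j\to\infty$.
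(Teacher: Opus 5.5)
Your reduction to the rational version and your Step~1 are correct: density gives, for every $k$, some $V_j$ containing a uniformly isomorphic copy of $\ell_\infty^k$. The argument breaks down at Step~2, which you yourself flag as the main obstacle, and both of your proposed ways around it fail.

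\textbf{Problems with the proposed route.} First, the route through $\mu_n$ cannot work. Restricting $\psi$ to the sphere of a cube-like subspace $E\subsetneq V_j$ gives a map into the higher-dimensional sphere $S_{H_j}$, which carries no degree. Moreover, ``coarse Lipschitz plus smoothing'' does not yield Lipschitz bounds independent of dimension; the coarse bound is trivial for sphere-valued maps anyway.

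Second, the input you propose for Step~2 is false. Every finite-dimensional normed space embeds $(1+\varepsilon)$-isometrically into some $\ell_\infty^N$, hence into $c_0$. So a $d$-dimensional subspace of $c_0$ may be almost isometric to $\ell_2^d$, and then it contains no uniformly isomorphic $\ell_\infty^k$ with $k$ large. For the same reason your fallback claim is false, namely that the obstruction depends only on $V_j$ embedding almost isometrically into some $\ell_\infty^N$ together with $\dim V_j\to\infty$. The identity $S_{\ell_2^d}\to S_{\ell_2^d}$, transported to such a subspace, gives uniformly Lipschitz maps of degree one. Hence no bound of the form $\omega(c/\log\dim V_j)\ge c'$ can hold. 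The relevant parameter is the dimension $k$ of an approximate cube inside $V_j$, not $\dim V_j$.

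\textbf{The missing idea.} The paper decouples the degree hypothesis from the averaging measure.
\begin{enumerate}
\item Extend $F_j=T_j\circ\psi|_{S_{V_j}}$ to all of $V_j$ by $\Theta_{F,R}$. Nonzero degree on the whole sphere $S_{V_j}$ then yields a translate $a\in V_j$ with $\E\,\Theta_{F,R}(a+UZ)=0$ (\Cref{lem:centering}).
\item Here $U:\ell_\infty^k\to V_j$ is the approximate cube from your Step~1, with $\|z\|_\infty\le\|Uz\|\le3\|z\|_\infty$. The variable $Z$ follows the stationary law of a product Markov chain on $I_r^k$ with a dimension-free spectral gap and $\ell_\infty$-steps of length at most one (\Cref{lem:product}).
\item The center $a$ is an arbitrary point of $V_j$, not of $U(\ell_\infty^k)$. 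So one needs a small-ball estimate uniform over all $a\in V_j$. This comes from Hahn--Banach extensions of the coordinate functionals of $U$ to norm-one functionals on $V_j$, which reduce it to independent one-dimensional estimates (\Cref{lem:small-ball}).
\item The vector-valued Poincar\'e inequality then forces $\omega_\psi(6/r)>1/8$ for every $r$ (\Cref{thm:paving}). This contradicts $\omega_\psi(t)\to0$.
\end{enumerate}
No Lipschitz bounds are used, and nothing about $V_j$ is needed beyond its containing the approximate cube.
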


\subsection{The Lipschitz problems of Gromov and Johnson}

We next turn to two finite-dimensional Lipschitz problems posed
independently by Gromov and Johnson. Gromov considered maps of
nonzero degree between the unit spheres of $\ell_\infty^n$ and
$\ell_2^n$, whereas Johnson asked about homeomorphisms between the
corresponding closed unit balls. Although the two problems have
different origins, they are linked by their sphere formulations and,
as we shall see, by the same sharp logarithmic asymptotic.

In his discussion of positive scalar curvature and the Novikov
conjecture, Gromov posed the following question in
\emph{Spaces and questions} \cite[Section~II(b)]{Gromov}.
We reproduce it with the notation and sphere terminology
adapted to our conventions.\footnote{Our sphere indices refer
to the dimensions of the underlying vector spaces. In the
stabilized version, we interpret the map as having nonzero
degree between manifolds of the same dimension: the second
sphere factor has dimension $M$, and both the domain and
the range have dimension $N+M-1$.}
\begin{quote}
It appears that an essential part of the difficulty in understanding
$\mathbf{S}>0$ (and the Novikov conjecture) is linked to the following
simple minded question:
\emph{what is the minimal $\lambda>0$, such that the unit sphere
$S_{\ell_\infty^N}$ in the Banach space
$\ell_\infty^N=(\R^N,\|x\|_\infty=\sup_i|x_i|)$ admits a
$\lambda$-Lipschitz map into the Euclidean unit sphere $S_{\ell_2^N}$ in
$\R^N=\ell_2^N$ with non-zero degree}?
Probably, $\lambda\to\infty$ for $N\to\infty$ (even if we stabilize
to maps $S_{\ell_\infty^N}\times S_{\ell_2^{M+1}}(r)\to S_{\ell_2^{N+M}}$ with
arbitrarily large $M$ and $r$) and this might indicate new ways of
measuring ``size of $V$'' in the context of $\mathbf{S}>0$ and the
Novikov conjecture.
\end{quote}

For a normed space $E$ and $r>0$, write $B_E(r)=\{x\in E:\|x\|_E\le r\},  S_E(r)=\{x\in E:\|x\|_E=r\}$.
We abbreviate $B_E(1)$ and $S_E(1)$ to $B_E$ and $S_E$.
Thus $S_{\ell_2^k}(r)$ is the Euclidean sphere in $\R^k$
of radius $r$ centered at the origin; its dimension is $k-1$.
All spheres carry the distances induced by their respective norms.

For a map $f:\mathcal{M}\to\mathcal{N}$ between metric spaces
$(\mathcal{M},d_{\mathcal{M}})$ and
$(\mathcal{N},d_{\mathcal{N}})$, define
\[
 \Lip(f)=\sup_{x\ne y}
 \frac{d_{\mathcal{N}}(f(x),f(y))}
      {d_{\mathcal{M}}(x,y)}.
\]
On $S_{\ell_\infty^n}\times S_{\ell_2^{m+1}}(r)$,
we use the maximum product metric
\[
 d_\infty((x,u),(x',u'))=
 \max\{\|x-x'\|_\infty,\|u-u'\|_2\}.
\]
With these conventions, we state the original and stabilized
versions of Gromov's sphere problem as follows.

\begin{problem}[Gromov {\cite[Section~II(b)]{Gromov}}]\label{prob:gromov}
Do the least Lipschitz constants of nonzero-degree maps
$S_{\ell_\infty^n}\to S_{\ell_2^n}$ tend to infinity as
$n\to\infty$? Does the same conclusion remain valid after
stabilization, when the dimension and radius of the stabilizing
Euclidean sphere may vary with $n$?
\end{problem}

Johnson asked the corresponding boundedness question for
homeomorphisms between the closed unit balls.
\begin{problem}[Johnson {\cite{Johnson}}]\label{prob:johnson}
Is there a sequence of homeomorphisms
$g_n:B_{\ell_\infty^n}\to B_{\ell_2^n}$, $n\ge2$, whose
Lipschitz constants are uniformly bounded?
\end{problem}

The ball and sphere formulations are equivalent for boundedness
independent of dimension: a homeomorphism between closed unit balls
of finite-dimensional normed spaces maps their boundaries onto each
other, and a Lipschitz homeomorphism between the spheres in
the corresponding sphere formulation extends by positive homogeneity to a homeomorphism
of their balls, with the bound in \Cref{lem:radial}.
No control of the inverse is required. The sphere formulation is also
used in \cite[Problem~1.2]{BGLMPS}. For the sharp asymptotic in
Johnson's ball problem, however, we shall use a separate
interior construction.

Recently, Braga, Gartland, Lancien, Motakis, Perneck\'a and Schlumprecht
\cite{BGLMPS} studied common moduli of uniform continuity for
homeomorphisms from $S_{\ell_\infty^n}$ to $S_{\ell_1^n}$. They excluded
such families when the maps do not increase support sizes or are step
preserving. Here we rule out a common modulus for all nonzero-degree
maps into Euclidean spheres, without either assumption. This also
excludes a common modulus for homeomorphisms into $S_{\ell_1^n}$:
composition with the classical Mazur homeomorphism
$S_{\ell_1^n}\to S_{\ell_2^n}$, whose $1/2$-H\"older constant is
independent of $n$, would give such a modulus in the Euclidean case.
For Lipschitz maps into Euclidean spheres, we further determine the
sharp asymptotic.

To state our quantitative results, for $n\ge2$ define
\begin{align}
 \lambda_n&=\inf\{\Lip(h):h:S_{\ell_\infty^n}\to S_{\ell_2^n}
                    \text{ is a homeomorphism}\},\label{eq:lambda}\\
 \mu_n&=\inf\{\Lip(f):f:S_{\ell_\infty^n}\to S_{\ell_2^n},\
                    \deg f\ne0\},\label{eq:mu}
\end{align}
and define Johnson's ball constant by
\begin{equation}\label{eq:Cn}
 C_n=\inf\{\Lip(g):g:B_{\ell_\infty^n}\to B_{\ell_2^n}
                    \text{ is a homeomorphism onto }B_{\ell_2^n}\}.
\end{equation}
Finally, define
\begin{equation}\label{eq:mu-stable}
\begin{split}
 \mu_n^{\mathrm{st}}
 &=\inf_{\substack{m\ge1,\ r>0}}\inf\{\Lip(F):\\
 &\hspace{15mm}
 F:S_{\ell_\infty^n}\times S_{\ell_2^{m+1}}(r)
 \to S_{\ell_2^{n+m}},\ \deg F\ne0\}.
\end{split}
\end{equation}
All maps in these definitions are required to be Lipschitz.
The next theorem answers Gromov's problem and the sphere formulation
associated with Johnson's problem, and determines the sharp
asymptotic behavior of the three sphere quantities.
Throughout the paper, $\log$ denotes the natural logarithm.

\begin{theorem}\label{thm:gromov}
Let $\lambda_n$, $\mu_n$, and $\mu_n^{\mathrm{st}}$, $n\ge2$,
be defined above. Then:
\begin{enumerate}
\item
The three quantities satisfy
\begin{equation}\label{eq:sharp-gromov}
 \lim_{n\to\infty}\frac{\lambda_n}{\log n}
 =\lim_{n\to\infty}\frac{\mu_n}{\log n}
 =\lim_{n\to\infty}\frac{\mu_n^{\mathrm{st}}}{\log n}
 =\frac12.
\end{equation}

\item
For every $n\ge2$, there is an explicit homeomorphism
$\Phi_n:S_{\ell_\infty^n}\to S_{\ell_2^n}$ satisfying
\begin{equation}\label{eq:explicit-upper}
 \Lip(\Phi_n)\le1+\arsinh\sqrt{n-1}.
\end{equation}
This, together with \eqref{eq:sharp-gromov}, gives
$\Lip(\Phi_n)\sim\frac12\log n$ as $n\to\infty$.

\item
For every fixed $r>0$, the infimum of the Lipschitz constants
of all nonzero-degree maps
$S_{\ell_\infty^n}\times S_{\ell_2^2}(r)\to S_{\ell_2^{n+1}}$
is asymptotic to $\frac12\log n$ as $n\to\infty$.
\end{enumerate}
\end{theorem}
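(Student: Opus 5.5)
Upper and lower bounds are handled separately. Since every homeomorphism has degree $\pm1$, and since from a nonzero-degree $f$ the map $F(x,u)=\sigma(f(x),u)$ (a suspension/join-type map) gives a stabilized map, we expect $\mu_n^{\mathrm{st}}\lesssim\mu_n\le\lambda_n$ up to $o(\log n)$ corrections. Therefore it suffices to prove the upper bound $\lambda_n\le 1+\arsinh\sqrt{n-1}$ (item (2)) and the lower bound $\liminf\mu_n^{\mathrm{st}}/\log n\ge\frac12$, together with its fixed-$r$, $m=1$ version for item (3). Note $\arsinh\sqrt{n-1}=\log(\sqrt{n-1}+\sqrt n)\sim\frac12\log n$.

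\textbf{Upper bound.} We build $\Phi_n$ by a hyperbolic reparametrization of the radial projection. The naive map $x\mapsto x/\|x\|_2$ has Lipschitz constant of order $\sqrt n$, because near a face point with many small coordinates $\|x\|_2$ is large. Instead, on the face $\{x_i=\pm1\}$ we rescale the transverse coordinates $y\in B_{\ell_\infty^{n-1}}$ by a radial profile $\rho(\|y\|_2)$ chosen so that the Euclidean angle from the face centre grows like $\arsinh$. The condition that the tangential stretch equals the radial stretch is an ODE whose solution is $\theta(t)=\arctan(\sinh(\cdot))$ type, and integrating it up to $\|y\|_2\le\sqrt{n-1}$ gives total cost $\arsinh\sqrt{n-1}$. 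The extra $1$ absorbs the $\ell_\infty$ versus $\ell_2$ comparison within a face and the gluing across faces. We check continuity and injectivity on face boundaries, and then bound the Lipschitz constant facewise, using that $S_{\ell_\infty^n}$ is geodesic for $\|\cdot\|_\infty$ paths across faces.

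\textbf{Lower bound.} This is the main obstacle. We use a degree–Poincaré argument. If $f:S_{\ell_\infty^n}\to S_{\ell_2^n}$ has nonzero degree, then the Euclidean coordinate functions $f_1,\dots,f_n$ satisfy $\sum f_i^2=1$, and degree $\neq0$ forces their averages against the surface measure on the cube faces to satisfy a nondegeneracy condition. Concretely, $\int\langle f,\nu\rangle$ is bounded below by a degree-weighted volume, via Stokes and the extension of $f$ to the ball. On the other hand, the Lipschitz bound $L$ in $\ell_\infty$ gives $|\nabla f_i|_{1}\le L$ on faces, hence $\ell_1$-gradient control. A Gaussian/cube Poincaré–log-Sobolev estimate then says each $f_i$ concentrates within $O(L/\sqrt{n})$-scale fluctuations in $\ell_2$ across typical points. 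Pairing the two should yield $\sum_i\|f_i-\bar f_i\|$ too small unless $L\ge(\frac12-o(1))\log n$. The $\log n$ arises from the maximum of $n$ coordinates: a point of the cube with $x_i$ near $\pm1$ is at $\ell_\infty$-distance $\approx$ $1-|x_i|$ from a face, and the expected gap in a typical point is about $\frac{1}{n}$-quantile. Integrating $L\,d(\log)$ along the path from the face centre out to the corner gives $\int_{1/\sqrt n}^1 dt/t=\frac12\log n$. We would first try to make this rigorous by comparing $f$ with the radial retraction along the diagonal rays, showing the image of a corner neighborhood must cover a hemisphere, and using the isoperimetric inequality on $S^{n-1}$. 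For the stabilized and fixed-$r$ variants we apply the same estimate fibrewise in $u$, noting that the $S_{\ell_2^{m+1}}(r)$ factor does not help. Averaging over $u$ preserves the degree via the slant product, and the max metric keeps the $x$-Lipschitz constant $\le L$.
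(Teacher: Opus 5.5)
Both halves of your proposal have genuine gaps. The lower bound, which you flag as the main obstacle, is a collection of heuristics rather than an argument, and its one concrete link between degree and the coordinates of $f$ is false. The flux $\int_{S_{\ell_\infty^n}}\langle f,\nu\rangle$ equals $\int_{B_{\ell_\infty^n}}\operatorname{div}F$ for any Lipschitz extension $F$, and it carries no degree information. For example, take $n$ even and $J$ the complex structure on $\R^n$ pairing coordinates. Then $f=J(x/|x|_2)$ has degree $1$ and zero flux, since each face integrand is odd in a transverse coordinate. Likewise $-x/|x|_2$ has degree $1$ and negative flux.

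Your gradient bookkeeping also loses a factor $n$. The bounds $|\nabla f_k|_1\le L$ only give $\sum_k|\nabla f_k|_2^2\le nL^2$. The argument needs the dimension-free bound $\sum_i|Te_i|_2^2=\E_\varepsilon|T\varepsilon|_2^2\le\|T\|^2_{\ell_\infty^n\to\ell_2}$ for the derivative $T$. In addition, a Gaussian measure would only yield order $\sqrt{\log n}$, because its small-ball threshold is $R\approx\sqrt{2\log n}$.

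The paper's mechanism runs as follows. It extends $F(\cdot,y)$ homogeneously and truncates coordinatewise, setting $u_R(x,y)=Q_y(C_R(x))$. Then $|u_R|_2=\min\{\|x\|_\infty/R,1\}$, and $u_R(\cdot,y)$ is $L/R$-Lipschitz off $[-R,R]^n$. Next it uses $\deg F\ne0$ (the boundary class vanishes in $B_{\ell_\infty^n}(\rho)\times Y$, after a homotopy at large $\rho$) to find $(a_0,y_0)$ with $\E u_R(a_0+Z,y_0)=0$ for a product Laplace vector $Z$. Then $1-p\le\E|u_R(a_0+Z,y_0)|_2^2\le\frac4{R^2}\bigl[L^2+(3L^2+1)p\bigr]$ with $p\le(1-e^{-R})^n$. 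Taking $R=(1-\delta)\log n$ gives $L\ge(1-o(1))\frac{1-\delta}2\log n$. So the $\log n$ comes from exponential tails against $n$ independent coordinates, and the $\frac12$ from the Laplace Poincar\'e constant $4$, not from $\int_{1/\sqrt n}^1dt/t$.

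For the stabilized problem, a fibrewise argument cannot use the degree at all. Each $F(\cdot,u)$ maps into a sphere of higher dimension and is null-homotopic. The zero must be located jointly in $(a,y)$ before $y_0$ is frozen.

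The upper bound has a gap exactly where you place ``the extra $1$''. Suppose the point $(1,y)$ of the face $\{x_1=1\}$ is sent to the point at angle $\theta(|y|_2)$ from $e_1$ in the direction of $y$; this covers rescaling $y$ by $\rho(|y|_2)$ and projecting. At a ridge point $(1,1,y')$, the recipes of the faces $\{x_1=1\}$ and $\{x_2=1\}$ agree only if $\tan\theta(s)=s$, where $s=(1+|y'|_2^2)^{1/2}$ sweeps $[1,\sqrt{n-1}]$. So whatever profile you pick, on $\{|y|_2\ge1\}$ the map is forced to be the naive projection $x/|x|_2$. Its stretch at $(1,1,0,\dots,0)$ in the ridge direction $(0,0,1,\dots,1)$ is $\sqrt{(n-2)/2}$, and you give no other gluing.

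Moreover, $S_{\ell_\infty^n}$ is not geodesic for the ambient distance. The points $\pm e_1$ are at distance $2$, but every path in the sphere joining them has $\ell_\infty$-length at least $4$. Facewise bounds would therefore lose up to a factor $2$ in the constant.

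The paper avoids both problems with the coordinatewise map $\Phi_n(z)=(g(z_i))_i/|(g(z_i))_i|_2$. Here $a=\arsinh\sqrt{n-1}$, and $g$ is odd with $g(t)=\sinh((a+1)t)/\sqrt{n-1}$ on $[0,a/(a+1)]$ and $g(t)=\cosh((a+1)t-a)$ beyond. The identity $g'(t)^2/(a+1)^2-g(t)^2\in\{(n-1)^{-1},-1\}$ lets the coordinate with $|z_i|=1$ cancel the other $n-1$. This gives $\|D\Phi_n(z)\|_{\ell_\infty^n\to\ell_2^n}\le a+1$ on $\{\|z\|_\infty>a/(a+1)\}$. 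Straight segments between points at distance less than $2/(a+1)$ stay in that region, and the diameter bound $2$ handles the rest.

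Your suspension idea for the stabilized upper bound is fine. The paper's circle map gives $\Lip\le\Lip(\Phi_n)+\pi/(2r)$, which covers both $\mu_n^{\mathrm{st}}$ and item (3).
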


The lower bounds in \Cref{thm:gromov} are uniform over all
$m\ge1$ and $r>0$. More generally, \Cref{thm:sharp-parameter}
gives the same bound for the Lipschitz constant in the first
variable when the second sphere factor is replaced by any closed
connected oriented manifold. The proof combines a Poincar\'e
inequality in the first variable with the nonzero degree of the
map. The conclusions of \Cref{thm:gromov} also hold for the
Euclidean product metric.

On $S_{\ell_2^k}(r)$, the chordal and geodesic distances satisfy
$d_{\mathrm{ch}}\le d_{\mathrm{geo}}\le(\pi/2)d_{\mathrm{ch}}$,
uniformly for $k\ge2$ and $r>0$. Hence the negative answer to
\Cref{prob:gromov}, including its stabilized version, remains
valid when the Euclidean spheres carry geodesic distances.
The precise asymptotics in \Cref{thm:gromov} refer to chordal distances.

The sharp sphere estimate also supplies the lower bound for Johnson's
ball problem, while a separate interior construction gives
the matching upper bound.

\begin{theorem}\label{thm:johnson-ball}
Let $\lambda_n$ and $C_n$ be defined by \eqref{eq:lambda} and
\eqref{eq:Cn}, respectively. Then
\[
 \lim_{n\to\infty}\frac{\lambda_n}{\log n}
 =\lim_{n\to\infty}\frac{C_n}{\log n}=\frac12.
\]
More precisely,
\begin{equation}\label{eq:ball-main-bound}
 \lambda_n\le C_n\le 1+\arsinh\sqrt{n-1}+
 \left(1+\frac{n}{2(n-1)}\right)
 \sqrt{1+\arsinh\sqrt{n-1}}.
\end{equation}
\end{theorem}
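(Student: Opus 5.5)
We take \Cref{thm:gromov} as given; the task is to compare $C_n$ with $\lambda_n$ and then build an explicit homeomorphism of the balls with the bound \eqref{eq:ball-main-bound}.

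\emph{Lower bound $\lambda_n\le C_n$.} Let $g:B_{\ell_\infty^n}\to B_{\ell_2^n}$ be a Lipschitz homeomorphism onto $B_{\ell_2^n}$. By invariance of domain it maps interior points to interior points, so $g(S_{\ell_\infty^n})=S_{\ell_2^n}$. Hence $h=g|_{S_{\ell_\infty^n}}$ is a homeomorphism between the spheres. Both spheres carry the restricted norm distances, so $\Lip(h)\le\Lip(g)$. This gives $\lambda_n\le C_n$. With \Cref{thm:gromov}(1) we get $\liminf C_n/\log n\ge\frac12$.

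\emph{Upper bound.} Let $\Phi_n$ be the homeomorphism from \Cref{thm:gromov}(2), and put $L=\Lip(\Phi_n)\le 1+\arsinh\sqrt{n-1}$. The naive radial extension $g(x)=\|x\|_\infty\Phi_n(x/\|x\|_\infty)$ is controlled by \Cref{lem:radial}. Its constant, however, involves the factor of order $L$ coming from differentiating $x/\|x\|_\infty$ near the centre. This gives roughly $2L+1$, which is not sharp.

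The plan is to split the ball into two regions:
\begin{itemize}
\item an outer shell $\{t\le\|x\|_\infty\le1\}$, on which we use $\Phi_n$ radially;
\item an inner core $\{\|x\|_\infty\le t\}$, which we send to a Euclidean ball by a map with Lipschitz constant $O(1)$ times a factor.
\end{itemize}
The parameter $t$ will be chosen of order $1-1/\sqrt{L}$, or more precisely so that the error term is $\sim\sqrt L$. This matches the form $L+(1+\frac{n}{2(n-1)})\sqrt{L}$ of the target bound.

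Concretely, we write $x=s\theta$ with $s=\|x\|_\infty$ and $\theta\in S_{\ell_\infty^n}$, and set $g(x)=\rho(s)\,\Phi_n(\theta)$. Here $\rho$ is increasing, $\rho(0)=0$, $\rho(1)=1$. For two points $x=s\theta$ and $y=s'\theta'$ we estimate
\[
\|g(x)-g(y)\|_2\le|\rho(s)-\rho(s')|+\min(\rho(s),\rho(s'))\,L\,\|\theta-\theta'\|_\infty .
\]
We then use $\|\theta-\theta'\|_\infty\le 2\|x-y\|_\infty/\max(s,s')$, or the sharper bound available when $s\approx s'$. So the angular term costs about $L\rho(s)/s$, and the radial term costs $\Lip(\rho)$.

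If we take $\rho(s)=s^{a}$ with $a$ large, or $\rho$ piecewise linear, then the angular term near $s=1$ is about $L$. The radial derivative is of order $a$, and the angular ratio stays at most $L$ up to a cross term. The cross term is what produces the $\sqrt L$ correction once the two errors are balanced. The factor $\frac{n}{2(n-1)}$ presumably comes from a careful estimate of $\|\theta-\theta'\|_\infty$ against $\|x-y\|_\infty$ in $\ell_\infty^n$, or from $\Phi_n$ being less expanding radially near the vertices.

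The limit statement then follows. Since $\arsinh\sqrt{n-1}\sim\frac12\log n$, we get $\limsup C_n/\log n\le\frac12$, and together with the lower bound both limits equal $\frac12$.

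The main obstacle is the interior construction. We must choose the radial profile and the inner core so that the cross terms cost only $O(\sqrt L)$ rather than $O(L)$. We would first try the optimisation over $\rho$ with the explicit two-point estimate above. If the $\ell_\infty$ normalisation is too lossy near the centre, we would instead use the specific structure of $\Phi_n$: for example, we would check whether $\Phi_n$ commutes with coordinate symmetries, or whether it is close to the identity on the faces near the coordinate axes.
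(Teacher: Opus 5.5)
\textbf{The lower bound is fine; the upper-bound construction fails.} Your lower bound $\lambda_n\le C_n$ is exactly the paper's argument: restrict a ball homeomorphism to the boundary and use invariance of domain. The upper bound has a genuine gap. The ansatz $x\mapsto\rho(\|x\|_\infty)\,\Phi_n(x/\|x\|_\infty)$ cannot reach $\frac12\log n+o(\log n)$, whatever the profile $\rho$ and whatever you do on an inner core.

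Take $x=s\theta$ with a unique extremal coordinate $\theta_j=\epsilon_j=\pm1$. The derivative of your map in the direction $v$ is
\[
 \rho'(s)\,\epsilon_jv_j\,\Phi_n(\theta)+\frac{\rho(s)}{s}\,D\Phi_n(\theta)w,
 \qquad w=v-\epsilon_jv_j\theta .
\]
The two terms are orthogonal, and $\rho(s)/s\to1$ as $s\to1$ because $\rho(1)=1$. Hence the Lipschitz constant is at least $|D\Phi_n(\theta)w|_2$ for every $\|v\|_\infty\le1$, and $\|w\|_\infty$ can be close to $2$.

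$\Phi_n$ does not compensate for this loss. Write $\Phi_n=\gamma/|\gamma|_2$ with $\gamma(z)=(g(z_1),\ldots,g(z_n))$, and $g$, $a$ as in \Cref{lem:upper-map}. Take $\theta=(1,t,t,0,\ldots,0)$ with $t\uparrow a/(a+1)$, and $v=(1,-1,-1,\pm1,\ldots,\pm1)$. Then $g(t)\to1$, $g'(t)\to(a+1)\sqrt{n/(n-1)}$, and $g'(0)=(a+1)/\sqrt{n-1}$. Use
\[
 |D\Phi_n w|_2^2=\frac{|D\gamma\,w|_2^2}{|\gamma|_2^2}-\frac{(\gamma\cdot D\gamma\,w)^2}{|\gamma|_2^4}.
\]
After letting $t\uparrow a/(a+1)$, the quantity $(a+1)^{-2}|D\Phi_n(\theta)w|_2^2$ tends, as $n\to\infty$, to
\[
 \frac{9}{\cosh^2(1)+2}-\frac{16}{(\cosh^2(1)+2)^2}\approx1.22 .
\]
So your map has Lipschitz constant at least about $1.1\,(1+\arsinh\sqrt{n-1})\sim0.55\log n$.

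The loss sits at the boundary, not near the centre as you anticipated. Your claim that the angular term near $s=1$ costs about $L$ holds only for displacements inside a level set $\{\|x\|_\infty=s\}$. In general your two-point estimate gives only $\Lip(\rho)+2L$. Your explanations of the $\sqrt L$ and $\frac{n}{2(n-1)}$ terms are therefore not backed by any estimate.

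\textbf{The missing idea.} You need to avoid the $\ell_\infty$ radial projection altogether. In \Cref{lem:ball-upper} the coordinate profile is applied to $x$ itself, and $\|x\|_\infty$ enters only through a scalar in the normalization:
\[
 g_n(x)=\frac{(\alpha_n(x_1),\ldots,\alpha_n(x_n))}{\sqrt{\sum_i\alpha_n(x_i)^2+b_n(\|x\|_\infty)}} .
\]
Here $\alpha_n$ is the hyperbolic profile of \Cref{lem:upper-map} with slope raised to $L_n=1+a+\sqrt{1+a}$. This moves its joining point in to $\rho_n=1-(1+a)^{-1/2}$. The term $b_n$ equals $\frac n{n-1}$ on the core $\|x\|_\infty\le\rho_n$ and decreases linearly to $0$ on the shell.
\begin{itemize}
\item On the core, the identity $\alpha_n'^2=L_n^2(\alpha_n^2+\frac1{n-1})$, summed over the $n$ coordinates, is absorbed exactly by $b_n=\frac n{n-1}$. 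So the derivative has norm at most $L_n$.
\item On the shell, the cancellation of \Cref{lem:upper-map} gives $\sum_i\alpha_n'(x_i)^2\le L_n^2\sum_i\alpha_n(x_i)^2$. The only extra cost is $\frac12|b_n'|=\frac{n}{2(n-1)}\sqrt{1+a}$.
\end{itemize}
The two square-root terms in \eqref{eq:ball-main-bound} come from balancing the shell width $\delta=1-\rho_n$. Shrinking the core raises the slope by about $a\delta$, while $\frac12|b_n'|\approx\frac1{2\delta}$. They do not come from a radial--angular cross term.

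Bijectivity is checked along each target ray $y\in S_{\ell_2^n}$. There $g_n$ maps the curve $t\mapsto(\alpha_n^{-1}(ty_i))_i$ onto $[0,y]$ with the strictly increasing coefficient $t/\sqrt{t^2+b_n}$.
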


\subsection{Outline of the proofs}

The proof of \Cref{thm:main} and the lower bounds in
\Cref{thm:gromov,thm:johnson-ball} use the same centering argument.
After extending a nonzero-degree sphere map to the ambient space, we
average its translates. Degree gives a translate with zero mean. A
lower bound for the resulting variance, combined with a Hilbert-valued
Poincar\'e inequality, then yields a lower bound on oscillation. We prove
the parameter version for an additional closed connected oriented
manifold; this includes Gromov's stabilized problem.

For \Cref{thm:main}, the averaging measures are the stationary measures
of finite product Markov chains with dimension-free spectral gaps.
Approximate cubes in the paving spaces of $c_0$, together with a
small-ball estimate uniform over all centers in the paving space, allow
the estimate to be applied to an arbitrary paving. For the sharp bounds in
\Cref{thm:gromov}, we instead use product Laplace measure and
coordinatewise truncation. Explicit coordinatewise homeomorphisms give
the matching sphere upper bound, and a circle factor gives the
stabilized upper bound. The lower bound for $C_n$ follows by restricting
a ball homeomorphism to the boundary; a radial modification of the
sphere construction gives the upper bound on the ball.

\Cref{sec:degree} establishes the degree--Poincar\'e estimate
used throughout the paper. In \Cref{sec:cubes}, we apply it to arbitrary
pavings of $c_0$ and prove \Cref{thm:main}, thereby answering the
Kasparov--Yu problem. \Cref{sec:lower,sec:upper} give
the matching lower and upper bounds in \Cref{thm:gromov}, answering
Gromov's problem, including its stabilized version, and the associated
sphere formulation of Johnson's problem. Finally,
\Cref{sec:johnson-ball} proves \Cref{thm:johnson-ball} and answers
Johnson's question for closed unit balls.

\section{A degree--Poincar\'e obstruction principle}\label{sec:degree}

This section isolates the common obstruction behind our lower bounds. The
nonzero degree of a map gives a translate at which a suitable average
vanishes, and a Poincar\'e inequality then converts the resulting variance
into a lower bound on local oscillation. We formulate the argument with a
parameter manifold from the outset, so that it applies both to rational
Property~$(H)$ and to the stabilized form of Gromov's sphere problem.

\subsection{Vanishing translated averages}\label{sec:centering}

Since we will average only in the first variable, we first fix the
corresponding modulus of continuity. For a normed space $E$, a Hilbert
space $H$, a compact space $Y$, and a continuous map
$F:S_E\times Y\to S_H$, define the modulus of continuity
$\omega_F$ of $F$ in the first variable by
\[
 \omega_F(t)=\sup\{\|F(x,y)-F(x',y)\|_H:
       y\in Y,\ x,x'\in S_E,\ \|x-x'\|_E\le t\}.
\]
We allow $Y$ to be a point. A family $(F_j)_{j\in J}$ of
sphere-valued maps has a common modulus of uniform continuity
in the first variable if there is a nondecreasing function
$\omega:[0,\infty)\to[0,2]$ such that
\[
 \lim_{t\to 0}\omega(t)=0,
 \qquad \omega_{F_j}(t)\le\omega(t)
 \quad(j\in J,\ t\ge0).
\] For the restrictions of the map $\psi$ in the definition
of rational Property~$(H)$, we may take $\omega=\omega_\psi$.

In this section, $E$ is an $n$-dimensional normed space,
$n\ge2$, $Y$ is a closed connected oriented $m$-dimensional manifold, and
$F:S_E\times Y\to S_{\ell_2^{n+m}}$ is continuous with nonzero degree.
When $m=0$, we take $Y$ to be a point. After choosing an orientation
on $E$, we give $S_E=\partial B_E$ the induced boundary orientation
and $S_E\times Y$ the product orientation. The condition
$\deg F\ne0$ is independent of the chosen orientations.

We use a basic fact about degree: a continuous map
$g:S_E(r)\times Y\to S_{\ell_2^{n+m}}$ that extends to
$B_E(r)\times Y$ must have degree zero. We recall the
argument here, since we will use it again in \Cref{sec:sharp}.

Since $Y$ has no boundary,
$\partial(B_E(r)\times Y)=S_E(r)\times Y$. Let
$i:S_E(r)\times Y\hookrightarrow B_E(r)\times Y$ be the
inclusion. We write $H_j(\,\cdot\,;\mathbb Z)$ for singular
homology with integer coefficients, $i_*$ for the homomorphism
induced by $i$, and $[M]$ for the fundamental class of a closed
oriented manifold $M$.

The fundamental class of the boundary is the boundary of the
relative fundamental class of $B_E(r)\times Y$. It therefore
maps to zero under inclusion:
\begin{equation}\label{eq:boundary-class}
 i_*[S_E(r)\times Y]=0
 \quad\text{in }H_{n+m-1}(B_E(r)\times Y;\mathbb Z).
\end{equation}
If $g$ extends to a map $G:B_E(r)\times Y\to S_{\ell_2^{n+m}}$,
then $g=G\circ i$, so
\[
 g_*[S_E(r)\times Y]
 =G_*i_*[S_E(r)\times Y]=0.
\]
By the definition of degree, the left-hand side equals
$(\deg g)[S_{\ell_2^{n+m}}]$, and hence $\deg g=0$.
We refer to \cite[Sections~2.2, 3.3]{Hatcher} for these
standard facts about degree and fundamental classes.

We next extend $F$ to $E\times Y$. For $R>0$, define
\begin{equation}\label{eq:extension}
 \Theta_{F,R}(x,y)=
 \begin{cases}
  \displaystyle\min\{\|x\|_E/R,1\}
       F\!\left(\frac{x}{\|x\|_E},y\right),&x\ne0,\\[2mm]
  0,&x=0.
 \end{cases}
\end{equation}
The factor $\min\{\|x\|_E/R,1\}$ makes this extension continuous
at $x=0$, while its norm is one whenever $\|x\|_E\ge R$.
The following lemma shows that every finite average of its translates
has a zero.

\begin{lemma}\label{lem:centering}
For every finitely supported $E$-valued random variable $Z$ and every
$R>0$, there exist $a\in E$ and $y_0\in Y$ such that
\begin{equation}\label{eq:mean-zero}
 \E\Theta_{F,R}(a+Z,y_0)=0.
\end{equation}
\end{lemma}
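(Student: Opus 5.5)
We argue by contradiction, using the degree fact recalled above. Write $Z$ as taking values $z_1,\dots,z_k$ with probabilities $p_1,\dots,p_k$, and define
\[
 G(a,y)=\E\Theta_{F,R}(a+Z,y)=\sum_{i=1}^k p_i\,\Theta_{F,R}(a+z_i,y),
 \qquad a\in E,\ y\in Y.
\]
This is a continuous map $E\times Y\to\ell_2^{n+m}$, since $\Theta_{F,R}$ is continuous (the cutoff factor handles $x=0$). Suppose that $G$ never vanishes. Then $\widehat G=G/\|G\|_2$ is a continuous map $E\times Y\to S_{\ell_2^{n+m}}$. For every $r>0$, its restriction to $B_E(r)\times Y$ is an extension of $\widehat G|_{S_E(r)\times Y}$. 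Hence $\deg\bigl(\widehat G|_{S_E(r)\times Y}\bigr)=0$ for all $r$.

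The main step is to show that for $r$ large, $\widehat G|_{S_E(r)\times Y}$ is homotopic to the map $(x,y)\mapsto F(x/r,y)$, which has the same nonzero degree as $F$ via the orientation-preserving homeomorphism $S_E\to S_E(r)$, $x\mapsto rx$. This gives the contradiction.

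Let $M=\max_i\|z_i\|_E$ and take $r\ge R+M$. For $\|a\|_E=r$ we then have $\|a+z_i\|_E\ge R$, so
\[
 \Theta_{F,R}(a+z_i,y)=F\!\left(\frac{a+z_i}{\|a+z_i\|_E},y\right).
\]
As $r\to\infty$, the unit vectors $(a+z_i)/\|a+z_i\|_E$ converge to $a/r$ uniformly in $a\in S_E(r)$. Indeed,
\[
 \left\|\frac{a+z_i}{\|a+z_i\|_E}-\frac ar\right\|_E\le\frac{2M}{r-M}.
\]
Since $F$ is uniformly continuous on the compact set $S_E\times Y$, it follows that
\[
 \sup_{\|a\|_E=r,\ y\in Y}\bigl\|G(a,y)-F(a/r,y)\bigr\|_2\le\omega_F\!\Bigl(\frac{2M}{r-M}\Bigr)\longrightarrow0 .
\]
Choose $r$ so large that this supremum is less than $1$. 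Then the straight-line homotopy $tG(a,y)+(1-t)F(a/r,y)$, $t\in[0,1]$, stays at distance less than $1$ from the unit vector $F(a/r,y)$, so it never vanishes. Normalizing it gives a homotopy in $S_{\ell_2^{n+m}}$ from $F(\cdot/r,\cdot)$ to $\widehat G|_{S_E(r)\times Y}$.

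By homotopy invariance, $\deg\widehat G|_{S_E(r)\times Y}=\deg F\ne0$. This contradicts the extension argument above. Hence $G(a,y_0)=0$ for some $a\in E$ and $y_0\in Y$, which is \eqref{eq:mean-zero}.

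The only delicate point is orientation bookkeeping: the identification $S_E\cong S_E(r)$ is the restriction of a positive dilation of $E$, so it preserves the boundary orientation, and its product with the identity on $Y$ preserves the product orientation. In any case, only the nonvanishing of the degree is used, so a sign change would not matter.
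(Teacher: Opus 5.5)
Your proof is correct and follows the paper's argument essentially step for step: you compare the averaged map on a large sphere $S_E(r)\times Y$ with $F$ via the normalization estimate, join them by a normalized straight-line homotopy, and then use the fact that a map extending over $B_E(r)\times Y$ has degree zero. The only differences are cosmetic: you use the slightly weaker bound $2M/(r-M)$ instead of $2M/r$, and you assume $G$ never vanishes on all of $E\times Y$ rather than only on $B_E(r)\times Y$.
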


\begin{proof}
Fix $R>0$, and define $A:E\times Y\to B_{\ell_2^{n+m}}$ by
\[
 A(a,y)=\E\Theta_{F,R}(a+Z,y),
 \qquad (a,y)\in E\times Y.
\]
Write $\operatorname{supp}Z=\{z\in E:\PP(Z=z)>0\}$ for the
finite support of $Z$, and set
$M=\max\{\|z\|_E:z\in\operatorname{supp}Z\}$.
The map $A$ is continuous, being a finite average of continuous maps.

Let $r>M+R$. To compare $A(rx,y)$ with $F(x,y)$,
we use the following elementary estimate for nonzero vectors
in a normed space:
\begin{equation}\label{eq:normalization}
 \left\|\frac{x}{\|x\|}-\frac{y}{\|y\|}\right\|
 \le\frac{2\|x-y\|}{\max\{\|x\|,\|y\|\}}
 \qquad(x,y\ne0).
\end{equation}
Indeed, if $\|x\|=s\ge t=\|y\|>0$, then the left side is at most
$\|x-y\|/s+(s-t)/s$, and $s-t\le\|x-y\|$.

Since $r>M+R$, we have $\|rx+z\|_E>R$ for every
$x\in S_E$ and $z\in\operatorname{supp}Z$.
By \eqref{eq:normalization},
\begin{equation}\label{eq:at-infinity}
 \sup_{x\in S_E,\,y\in Y}\|A(rx,y)-F(x,y)\|_2
 \le\omega_F(2M/r)\longrightarrow0.
\end{equation}
Since $E$ is finite-dimensional and $Y$ is compact,
$S_E\times Y$ is compact. The continuity of $F$ therefore
implies $\omega_F(t)\to0$ as $t\to0$, which gives
the convergence above as $r\to\infty$.

Choose $r>M+R$ sufficiently large that
\[
 \delta:=\sup_{x\in S_E,\,y\in Y}
 \|A(rx,y)-F(x,y)\|_2<1.
\]
For $0\le t\le1$, we have
\[
 \|(1-t)F(x,y)+tA(rx,y)\|_2\ge1-t\delta>0.
\]
Define $H_t:S_E\times Y\to S_{\ell_2^{n+m}}$ by
\[
 H_t(x,y)=
 \frac{(1-t)F(x,y)+tA(rx,y)}
      {\|(1-t)F(x,y)+tA(rx,y)\|_2}.
\]
The map $(t,x,y)\mapsto H_t(x,y)$ is continuous, and
\[
 H_0(x,y)=F(x,y),
 \qquad
 H_1(x,y)=\frac{A(rx,y)}{\|A(rx,y)\|_2}.
\]
Thus $H_1$ is homotopic to $F$, and homotopy invariance
of degree gives $\deg H_1=\deg F\ne0$.
Under the orientation-preserving identification
$(x,y)\mapsto(rx,y)$, $H_1$ is the normalized restriction
of $A$ to $S_E(r)\times Y$.
If $A$ had no zero in $B_E(r)\times Y$, its normalization
would extend this boundary map over $B_E(r)\times Y$,
contradicting \eqref{eq:boundary-class}.
\end{proof}

\subsection{A quantitative obstruction to uniform continuity}

We now apply \Cref{lem:centering} to a finite average whose weights
form the stationary distribution of a reversible Markov chain.
At the translate supplied by the lemma, the mean is zero, so the
variance equals the average squared norm of the extension. We
estimate this quantity from below using the probability that the
extension has norm one, and from above using a Poincar\'e inequality.
The comparison will give a quantitative lower bound for the modulus
of continuity of $F$.

Let $I$ be a finite set with at least two states, and let
$P=(p_{ij})_{i,j\in I}$ be an irreducible transition matrix on $I$,
reversible with respect to its stationary probability vector
$\pi=(\pi_i)_{i\in I}$. Thus $\pi_i>0$, $\sum_{i\in I}\pi_i=1$, and
\[
 p_{ij}\ge0,\qquad \sum_{j\in I}p_{ij}=1,\qquad
 \pi_i p_{ij}=\pi_j p_{ji}.
\]
In particular, stationarity gives $\sum_{i\in I}\pi_i p_{ij}=\pi_j$.
We define the spectral gap by
\begin{equation}\label{eq:gap}
 \gamma_{_P}=\frac12\inf_{\substack{f:I\to\R\\f\text{ nonconstant}}}
 \frac{\sum_{i,j\in I}\pi_i p_{ij}|f(i)-f(j)|^2}
      {\sum_{i\in I}\pi_i\left|f(i)-\sum_{j\in I}\pi_jf(j)\right|^2}.
\end{equation}
Since the chain is finite and irreducible, $\gamma_{_P}>0$.
For every $f:I\to\R$, the scalar Poincar\'e inequality is
\[
 \sum_{i\in I}\pi_i\left|f(i)-\sum_{j\in I}\pi_jf(j)\right|^2
 \le\frac1{2\gamma_{_P}}\sum_{i,j\in I}\pi_i p_{ij}|f(i)-f(j)|^2.
\]
For an integer $N\ge1$ and a map $h:I\to\ell_2^N$, applying this
inequality to each coordinate function $i\mapsto h(i)_k$,
$1\le k\le N$, and summing gives
\begin{equation}\label{eq:poincare}
 \sum_{i\in I}\pi_i\left\|h(i)-\sum_{j\in I}\pi_jh(j)\right\|_2^2
 \le\frac1{2\gamma_{_P}}\sum_{i,j\in I}\pi_i p_{ij}\|h(i)-h(j)\|_2^2.
\end{equation}
We will use this inequality with $N=n+m$ to estimate the modulus
of continuity of $F$ in the first variable.

\begin{lemma}\label{lem:obstruction}
Under the above assumptions, suppose that $(v_i)_{i\in I}\subset E$
and $D>0$ satisfy
\[
 \|v_i-v_j\|_E\le D
 \qquad(i,j\in I,\ p_{ij}>0).
\]
For $a\in E$ and $R>0$, set $I_R(a)=\{i\in I:\|a+v_i\|_E<R\}$.
Then, for every $R>0$,
\begin{equation}\label{eq:obstruction}
 \omega_F(2D/R)^2
 \ge 2\gamma_{_P}-(8+2\gamma_{_P})
       \sup_{a\in E}\sum_{i\in I_R(a)}\pi_i.
\end{equation}
\end{lemma}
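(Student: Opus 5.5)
The plan is to apply \Cref{lem:centering} to the random variable $Z$ that takes the value $v_i$ with probability $\pi_i$. Then I use the vector-valued Poincar\'e inequality \eqref{eq:poincare} with $N=n+m$ on the function $h(i)=\Theta_{F,R}(a+v_i,y_0)$, where $a$ and $y_0$ are the points supplied by the lemma. Fix $R>0$. If the weights $\pi_i$ put mass on repeated vectors, I sum them, so $Z$ is finitely supported. \Cref{lem:centering} then gives $a\in E$ and $y_0\in Y$ with $\sum_i\pi_i h(i)=0$. Write $s=\sum_{i\in I_R(a)}\pi_i$, which is at most the supremum on the right of \eqref{eq:obstruction}.

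\emph{Lower bound for the variance.} Because the mean vanishes, the left side of \eqref{eq:poincare} equals $\sum_i\pi_i\|h(i)\|_2^2$. For $i\notin I_R(a)$ we have $\|a+v_i\|_E\ge R$, so by \eqref{eq:extension} $h(i)=F\bigl((a+v_i)/\|a+v_i\|_E,y_0\bigr)$ has norm one. Hence the variance is at least $1-s$.

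\emph{Upper bound for the energy.} I split the sum $\sum_{i,j}\pi_ip_{ij}\|h(i)-h(j)\|_2^2$ according to whether the pair $(i,j)$ meets $I_R(a)$.

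If neither $i$ nor $j$ lies in $I_R(a)$ and $p_{ij}>0$, then both $h(i)$ and $h(j)$ are values of $F(\cdot,y_0)$ at the normalized points. By the normalization estimate \eqref{eq:normalization} these points are at distance at most $2\|v_i-v_j\|_E/R\le 2D/R$. Therefore $\|h(i)-h(j)\|_2\le\omega_F(2D/R)$. Since $\sum_{i,j}\pi_ip_{ij}=1$, these pairs contribute at most $\omega_F(2D/R)^2$.

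For pairs with $i\in I_R(a)$ or $j\in I_R(a)$, I use the trivial bound $\|h(i)-h(j)\|_2\le 2$, since $h$ takes values in the unit ball. Their total weight is at most
\[
\sum_{i\in I_R(a)}\pi_i\sum_j p_{ij}+\sum_{j\in I_R(a)}\sum_i\pi_ip_{ij}=2s,
\]
using $\sum_jp_{ij}=1$ and stationarity. So these pairs contribute at most $8s$.

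\emph{Conclusion.} Combining the two bounds through \eqref{eq:poincare} gives
\[
1-s\le\frac{\omega_F(2D/R)^2+8s}{2\gamma_{_P}},
\]
which rearranges to $\omega_F(2D/R)^2\ge 2\gamma_{_P}-(8+2\gamma_{_P})s$. Replacing $s$ by the supremum over $a$ gives \eqref{eq:obstruction}.

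The only delicate point is the energy split. The edges touching the small-ball set $I_R(a)$ must be charged with weight at most $2s$, and this is exactly where stationarity of $\pi$ is needed; reversibility is used only through the spectral gap. All the other steps are routine bookkeeping.
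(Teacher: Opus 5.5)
Your proposal is correct and follows the paper's proof essentially step for step. The shared steps are: centering via \Cref{lem:centering} applied to $i\mapsto v_i$ on $(I,\pi)$; the variance lower bound $1-s$; the energy split, in which pairs avoiding $I_R(a)$ are controlled by $\omega_F(2D/R)$ through \eqref{eq:normalization} and pairs meeting $I_R(a)$ carry total weight at most $2s$ by stationarity; and the final rearrangement of \eqref{eq:poincare}.
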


\begin{proof}
Fix $R>0$. We apply \Cref{lem:centering} to the random variable
$i\mapsto v_i$ on $(I,\pi)$. For the resulting $a\in E$ and $y_0\in Y$,
define $h:I\to B_{\ell_2^{n+m}}$ by $h(i)=\Theta_{F,R}(a+v_i,y_0)$.
Then $\sum_{i\in I}\pi_i h(i)=0$.

Write $I_R=I_R(a)$. For $i\in I\setminus I_R$, we have
$\|a+v_i\|_E\ge R$, so \eqref{eq:extension} gives $\|h(i)\|_2=1$.
Therefore,
\begin{equation}\label{eq:variance-lower}
 \sum_{i\in I}\pi_i\|h(i)\|_2^2
 \ge\sum_{i\in I\setminus I_R}\pi_i
 =1-\sum_{i\in I_R}\pi_i.
\end{equation}
We next estimate $\|h(i)-h(j)\|_2$ when $p_{ij}>0$.
If $i,j\in I\setminus I_R$, \eqref{eq:normalization} and the
hypothesis $\|v_i-v_j\|_E\le D$ give
\[
 \left\|\frac{a+v_i}{\|a+v_i\|_E}
       -\frac{a+v_j}{\|a+v_j\|_E}\right\|_E
 \le\frac{2\|v_i-v_j\|_E}{R}
 \le\frac{2D}{R}.
\]
By \eqref{eq:extension} and the definition of $\omega_F$, it follows
that $\|h(i)-h(j)\|_2\le\omega_F(2D/R)$.

For pairs with at least one index in $I_R$, we use
$\|h(i)-h(j)\|_2\le2$, since $h(I)\subset B_{\ell_2^{n+m}}$.
The sum of their weights $\pi_i p_{ij}$ is bounded using stationarity:
\[
 \begin{aligned}
 \sum_{\substack{i,j\in I\\i\in I_R\ \text{or}\ j\in I_R}}\pi_i p_{ij}
 &\le \sum_{i\in I_R}\sum_{j\in I}\pi_i p_{ij}
       +\sum_{j\in I_R}\sum_{i\in I}\pi_i p_{ij}\\
 &=2\sum_{i\in I_R}\pi_i.
 \end{aligned}
\]
Splitting the sum over these two sets of pairs and using
$\sum_{i,j\in I}\pi_i p_{ij}=1$ yields
\begin{equation}\label{eq:energy-upper}
 \sum_{i,j\in I}\pi_i p_{ij}\|h(i)-h(j)\|_2^2
 \le\omega_F(2D/R)^2+8\sum_{i\in I_R}\pi_i.
\end{equation}
We now apply the Poincar\'e inequality \eqref{eq:poincare} to $h$,
whose mean is zero. Substituting \eqref{eq:variance-lower} and
\eqref{eq:energy-upper} gives
\[
 2\gamma_{_P}\left(1-\sum_{i\in I_R}\pi_i\right)
 \le\omega_F(2D/R)^2+8\sum_{i\in I_R}\pi_i.
\]
Rearranging and bounding $\sum_{i\in I_R}\pi_i$ by its supremum
over all translates proves \eqref{eq:obstruction}.
\end{proof}

\Cref{lem:obstruction} gives
\[
 \sup_{a\in E}\sum_{i\in I_R(a)}\pi_i
 \le\frac{\gamma_{_P}}{8+2\gamma_{_P}}
 \quad\Longrightarrow\quad
 \omega_F(2D/R)\ge\sqrt{\gamma_{_P}}.
\]
Suppose this probability bound holds along a family of maps, the
spectral gaps have a common positive lower bound, and $D/R\to0$.
Then the family has no common modulus of uniform continuity in the
first variable.
In the next section, we construct the required chains and points in
sufficiently large spaces of any paving of $c_0$. The probability
bound will hold for every center in the whole paving space.

\section{Obstructions to rational Property $(H)$ in $c_0$}\label{sec:cubes}

The purpose of this section is to give a negative answer to the question of Kasparov and Yu for $c_0$: the space $c_0$ does not have rational Property $(H)$, and hence does not have Property $(H)$. To prove this, we apply the obstruction principle from \Cref{lem:obstruction} to an arbitrary paving of $c_0$. We first construct product chains on finite cubes with uniformly positive spectral gap and short transitions in the supremum norm. We then place uniformly controlled linear images of these cubes in sufficiently large spaces of the paving. A small-ball estimate uniform over all centers in the paving space completes the link with \Cref{lem:obstruction}.

\subsection{Product chains with short transitions}

We begin with a reversible chain on a finite interval whose spectral
gap is bounded below independently of the interval. Its nonnegative
spectrum preserves this bound under tensor products. In the supremum
norm, the product chain can update all coordinates simultaneously
while each transition still has length at most one.

For an integer $r\ge1$, let $I_r=\{-r,\ldots,r\}$ and set
\begin{equation}\label{eq:geometric-law}
 s_r=\sum_{j=-r}^r2^{-|j|}=3-2^{1-r},
 \qquad \pi_r(j)=s_r^{-1}2^{-|j|}\quad(j\in I_r).
\end{equation}
For $i,j\in I_r$ with $|i-j|=1$, define
\begin{equation}\label{eq:chain}
 P_r(i,j)=\frac14\min\{1,\pi_r(j)/\pi_r(i)\}.
\end{equation}
All other off-diagonal entries are zero, and the diagonal entries make
each row sum to one. The chain is irreducible, and
\[
 \pi_r(i)P_r(i,j)=\frac14\min\{\pi_r(i),\pi_r(j)\}
 \qquad(|i-j|=1),
\]
so it is reversible. For an integer $k\ge1$, let
$\mathcal P_{r,k}=P_r^{\otimes k}$ be the product chain on $I_r^k$,
with stationary probability measure $\mu_{r,k}=\pi_r^{\otimes k}$.
Each transition updates all coordinates independently.
We view $I_r^k$ as a subset of $\R^k$ with the supremum norm.

We write $L_2(\pi_r)=L_2(I_r,\pi_r)$ for the Hilbert space of
functions $f:I_r\to\R$, with norm
$\|f\|_{L_2(\pi_r)}=(\sum_{i\in I_r}\pi_r(i)|f(i)|^2)^{1/2}$.

\begin{lemma}\label{lem:product}
For integers $r,k\ge1$, let $P_r$ and $\mathcal P_{r,k}$ be
the chains defined above on $I_r$ and $I_r^k$, respectively.
The operator $P_r$ is positive semidefinite on $L_2(\pi_r)$ and
\begin{equation}\label{eq:product-gap}
 \gamma_{_{P_r}}\ge\frac{(1-2^{-1/2})^2}{4}>\frac1{64}.
\end{equation}
The chain $\mathcal P_{r,k}$ is irreducible and reversible with
stationary probability measure $\mu_{r,k}$. Its spectral gap has the
same lower bound, and for $z,z'\in I_r^k$,
\begin{equation}\label{eq:product-step}
 \mathcal P_{r,k}(z,z')>0
 \quad\Longrightarrow\quad\|z-z'\|_\infty\le1.
\end{equation}
\end{lemma}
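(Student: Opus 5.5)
The plan has three parts: show that $P_r$ is a lazy chain, which gives positive semidefiniteness; bound the one-dimensional gap by a weighted Hardy inequality; and pass to the product chain through its spectrum. The gap $\gamma_{_P}$ in \eqref{eq:gap} is a Rayleigh quotient. Since $\frac12\sum_{i,j}\pi_ip_{ij}|f(i)-f(j)|^2=\langle(I-P)f,f\rangle_{L_2(\pi)}$, and $P$ is self-adjoint on $L_2(\pi)$ by reversibility, $\gamma_{_P}=1-\beta$, where $\beta$ is the largest eigenvalue of $P$ on the orthogonal complement of the constants. I will use this identification throughout.

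\emph{Positivity.} Each row of $P_r$ has at most two off-diagonal entries, each at most $\frac14$. Hence $P_r(i,i)\ge\frac12$, and $Q=2P_r-I$ is again a stochastic matrix that is reversible with respect to $\pi_r$. A reversible stochastic matrix is a self-adjoint contraction on $L_2(\pi_r)$, so its spectrum lies in $[-1,1]$. The spectrum of $P_r=\frac12(I+Q)$ therefore lies in $[0,1]$.

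\emph{One-dimensional gap.} I will bound the variance by $\sum_i\pi_r(i)|f(i)-f(0)|^2$, using that the variance minimizes the mean-square deviation over all constants. Then I treat the two half-lines $\{0,\dots,r\}$ and $\{-r,\dots,0\}$ separately; by symmetry it suffices to treat the first. For $1\le k\le r$, write $d_k=f(k)-f(k-1)$. The edge $\{k-1,k\}$ carries Dirichlet weight $\pi_r(k-1)P_r(k-1,k)=\frac14\pi_r(k)=\frac14s_r^{-1}2^{-k}$. By Cauchy--Schwarz with weights $2^{k/2}$,
\[
\Bigl(\sum_{k\le i}d_k\Bigr)^2\le\Bigl(\sum_{k\le i}2^{k/2}\Bigr)\Bigl(\sum_{k\le i}2^{-k/2}d_k^2\Bigr)
\le\frac{2^{i/2}}{1-2^{-1/2}}\sum_{k\le i}2^{-k/2}d_k^2 .
\]
Multiplying by $2^{-i}$, summing over $i\ge k$, and exchanging the order of summation gives
\[
\sum_{i\ge1}2^{-i}\bigl(f(i)-f(0)\bigr)^2\le\frac{1}{(1-2^{-1/2})^2}\sum_{k\ge1}2^{-k}d_k^2 .
\]
The geometric sum $\sum_{i\ge k}2^{-i/2}$ is what produces the factor $(1-2^{-1/2})^{-2}$. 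Adding the two half-lines and reinserting the edge weight factor $\frac14$ gives $\operatorname{Var}_{\pi_r}f\le\frac{4}{(1-2^{-1/2})^2}\langle(I-P_r)f,f\rangle$, which is \eqref{eq:product-gap}. Numerically, $(1-2^{-1/2})^2/4\approx0.0214>1/64$. I expect this Hardy step to be the main obstacle, since the constants must be tracked carefully to land exactly on the stated bound.

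\emph{Product chain.} Detailed balance for $\mathcal P_{r,k}$ follows by multiplying the one-dimensional relations coordinatewise, and $\mu_{r,k}$ is then stationary. Irreducibility holds because every diagonal entry of $P_r$ is positive: one can move a single coordinate at a time while the others hold. The operator $\mathcal P_{r,k}=P_r^{\otimes k}$ acts on $L_2(\mu_{r,k})=L_2(\pi_r)^{\otimes k}$. Choosing an orthonormal eigenbasis of $P_r$ with $\phi_0=1$, its eigenvalues are the products $\lambda_{i_1}\cdots\lambda_{i_k}$ with each $\lambda_{i_\ell}\in[0,1]$. Any non-constant eigenvector has some index $i_\ell\ne0$, so the corresponding eigenvalue is at most $1-\gamma_{_{P_r}}$, since the other factors are at most $1$. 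Nonnegativity is used here: it prevents products of negative eigenvalues from approaching $1$. Hence $\gamma_{_{\mathcal P_{r,k}}}\ge\gamma_{_{P_r}}$. Finally, \eqref{eq:product-step} holds because a positive product transition requires $P_r(z_\ell,z'_\ell)>0$ for every $\ell$, which forces $|z_\ell-z'_\ell|\le1$.
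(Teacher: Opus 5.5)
Your proposal is correct and follows essentially the same route as the paper's proof. Laziness puts the spectrum of $P_r$ in $[0,1]$; comparing the variance with $f(0)$ and applying a weighted Hardy inequality on each half-line gives the constant $(1-2^{-1/2})^{-2}$; and the tensor eigenbasis with nonnegative eigenvalues handles the product chain. The differences are cosmetic: you prove the Hardy step by weighted Cauchy--Schwarz and an interchange of sums rather than Minkowski's inequality for shifts in $\ell_2(\mathbb Z)$, and you normalize by $\langle(I-P_r)f,f\rangle$ instead of the doubled edge sum, tracking the resulting factor $4$ correctly.
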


\begin{proof}
For every $i\in I_r$, the diagonal entry $P_r(i,i)$ is at least $1/2$.
Thus $2P_r-I$ is a self-adjoint Markov contraction on $L_2(\pi_r)$,
with spectrum contained in $[-1,1]$. The spectrum of $P_r$ therefore
lies in $[0,1]$.

To estimate the spectral gap, let $f:I_r\to\R$. For $1\le t\le r$,
\[
 2^{-t/2}(f(t)-f(0))
 =\sum_{j=1}^t2^{-(t-j)/2}2^{-j/2}(f(j)-f(j-1)).
\]
Extend the sequence $(2^{-j/2}(f(j)-f(j-1)))_{j=1}^r$ by zero
to all integer indices. For $1\le t\le r$, the left side is the
$t$-th coordinate of a sum of shifts of the extended sequence, with
coefficients $2^{-h/2}$, $h\ge0$. Minkowski's inequality in $\ell_2(\mathbb Z)$
and $\sum_{h=0}^{\infty}2^{-h/2}=(1-2^{-1/2})^{-1}$ give, after squaring,
\begin{equation}\label{eq:hardy}
 \sum_{t=1}^r2^{-t}|f(t)-f(0)|^2
 \le\frac1{(1-2^{-1/2})^2}
       \sum_{j=1}^r2^{-j}|f(j)-f(j-1)|^2.
\end{equation}
We apply the same estimate to $t\mapsto f(-t)$. For $1\le j\le r$,
\eqref{eq:chain} gives
\[
 \pi_r(j)P_r(j,j-1)
 =\pi_r(-j)P_r(-j,-j+1)=\frac{2^{-j}}{4s_r}.
\]
Each edge contributes twice to the sum over $I_r\times I_r$.
Since the mean minimizes the squared $L_2(\pi_r)$ distance to
constants, the estimates on the positive and negative indices yield
\[
 \begin{aligned}
 \sum_{i\in I_r}\pi_r(i)
 \left|f(i)-\sum_{j\in I_r}\pi_r(j)f(j)\right|^2
 &\le\sum_{i\in I_r}\pi_r(i)|f(i)-f(0)|^2\\
 &\le\frac{2}{(1-2^{-1/2})^2}
       \sum_{i,j\in I_r}\pi_r(i)P_r(i,j)|f(i)-f(j)|^2.
 \end{aligned}
\]
The definition \eqref{eq:gap} now gives \eqref{eq:product-gap}.

We next pass to $\mathcal P_{r,k}$. Reversibility follows from the
product formula. Irreducibility follows from that of $P_r$, since
its positive diagonal entries allow coordinate paths to be extended
to the same length. Choose an eigenbasis for $P_r$ in $L_2(\pi_r)$,
with eigenvalues
\[
 1=\lambda_0>\lambda_1\ge\cdots\ge\lambda_{2r}\ge0.
\]
By \eqref{eq:gap}, $\lambda_1=1-\gamma_{_{P_r}}$.
Tensor products of this eigenbasis diagonalize $\mathcal P_{r,k}$.
Each nonconstant tensor eigenvector has an eigenvalue that is a
product of numbers in $[0,1]$, at least one of which is at most
$\lambda_1$. Its eigenvalue is therefore at most $\lambda_1$, so the
product chain has the same spectral gap lower bound.
Finally, each coordinate changes by at most one, giving
\eqref{eq:product-step}.
\end{proof}

\subsection{Small-ball estimates for arbitrary centers}

We will apply the product chain inside a finite-dimensional subspace
of a larger normed space. The next estimate allows the center of the
ball to lie anywhere in that larger space.

\begin{lemma}\label{lem:small-ball}
Let $r,k\ge1$ be integers, let $E$ be a normed space, and let
$U:\ell_\infty^k\to E$ be a linear operator satisfying
\[
 \|z\|_\infty\le\|Uz\|_E\qquad(z\in\ell_\infty^k).
\]
Let $Z=(Z_1,\ldots,Z_k)$ be an $I_r^k$-valued random vector with
law $\mu_{r,k}$. Then
\begin{equation}\label{eq:small-ball}
 \sup_{a\in E}\PP(\|a+UZ\|_E<r)
 \le\left(1-\frac{2^{-r}}{3-2^{1-r}}\right)^k.
\end{equation}
\end{lemma}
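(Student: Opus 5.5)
The plan is to exploit the fact that the open ball $\{x\in E:\|x-(-a)\|_E<r\}$ has diameter at most $2r$, and that the lower bound $\|z\|_\infty\le\|Uz\|_E$ transfers this diameter restriction back to the cube $I_r^k$. Fix $a\in E$ and set
\[
 A=\{z\in I_r^k:\|a+Uz\|_E<r\},
\]
so that $\PP(\|a+UZ\|_E<r)=\mu_{r,k}(A)$. I will show that $A$ is contained in a product set of $\mu_{r,k}$-measure exactly $\bigl(1-\frac{2^{-r}}{3-2^{1-r}}\bigr)^k$. This set does not depend on $a$ except through a choice of signs, so the supremum over $a$ is handled automatically.

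\emph{Step 1 (diameter bound).} For $z,z'\in A$, the triangle inequality gives $\|U(z-z')\|_E\le\|a+Uz\|_E+\|a+Uz'\|_E<2r$. By the hypothesis on $U$, this yields $\|z-z'\|_\infty<2r$.

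\emph{Step 2 (each coordinate misses an endpoint).} Fix a coordinate $1\le\ell\le k$. Suppose $A$ contained points $z,z'$ with $z_\ell=r$ and $z'_\ell=-r$. Then $\|z-z'\|_\infty\ge|z_\ell-z'_\ell|=2r$, which contradicts Step 1. Hence there is a value $\varepsilon_\ell r$, with $\varepsilon_\ell\in\{-1,1\}$, such that $z_\ell\ne\varepsilon_\ell r$ for every $z\in A$. Note that the two points need not share their other coordinates; this is exactly what makes the argument give a genuine product set rather than a fibrewise statement. Consequently
\[
 A\subset\prod_{\ell=1}^k\bigl(I_r\setminus\{\varepsilon_\ell r\}\bigr).
\]

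\emph{Step 3 (measure computation).} Since $\mu_{r,k}=\pi_r^{\otimes k}$ and $\pi_r(r)=\pi_r(-r)=2^{-r}/s_r=2^{-r}/(3-2^{1-r})$ by \eqref{eq:geometric-law}, each factor has $\pi_r$-mass $1-2^{-r}/(3-2^{1-r})$, whatever the sign $\varepsilon_\ell$. It follows that $\mu_{r,k}(A)$ is at most the right-hand side of \eqref{eq:small-ball}. Taking the supremum over $a\in E$ finishes the proof.

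The only real point is Step 2: one must use the global diameter constraint in the supremum norm, not merely the constraint along lines. A fibrewise statement of the form ``each line section misses one endpoint'' would not by itself yield a product bound, since the missing endpoint could vary from fibre to fibre. The remaining steps are routine.
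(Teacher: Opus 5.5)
Your proof is correct, but it reaches the product bound by a different route from the paper.

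\textbf{The paper's route.} The paper works through the dual space. The coordinate functional $Uz\mapsto z_i$ has norm at most one on $U(\ell_\infty^k)$, and the paper extends it by Hahn--Banach to some $f_i\in E^*$ with $\|f_i\|\le1$. The event $\|a+UZ\|_E<r$ then forces $|f_i(a)+Z_i|<r$ for every $i$. An open interval of length $2r$ contains at most $2r$ integers, so it misses a point of $I_r$. The paper then bounds the missed mass by $\min_j\pi_r(j)=2^{-r}/(3-2^{1-r})$ and uses independence of the $Z_i$.

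\textbf{Your route.} You never leave $E$. You apply the lower bound on $U$ to differences $z-z'$ of two points in the event, which shows that each coordinate projection of the event has diameter less than $2r$. Hence no coordinate takes both values $\pm r$, and the event lies in a sign-dependent rectangle of exactly the stated measure.

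\textbf{Comparison.} Your argument is more elementary: it needs no duality, only linearity and the triangle inequality. It also makes explicit that the missed point is an endpoint, whose mass is precisely the minimal one. The paper's argument instead confines each coordinate to an explicit window determined by $f_i(a)$, which gives a bit more information for a fixed center $a$, although that extra information is not used. Both arguments yield the identical estimate.
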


\begin{proof}
The lower bound on $U$ allows us to recover each coordinate of $z$
from $Uz$ without increasing norms. Indeed, $U$ is injective, and
for each $1\le i\le k$, the functional $Uz\mapsto z_i$ has norm at
most one on $U(\ell_\infty^k)$. Extend it by Hahn--Banach to
$f_i\in E^*$
with $\|f_i\|\le1$.

Fix $a\in E$. If $\|a+UZ\|_E<r$, then
\[
 |f_i(a)+Z_i|<r\qquad(1\le i\le k).
\]
For each $i$, the interval $(-r-f_i(a),r-f_i(a))$ has length $2r$.
It contains at most $2r$ integers and therefore misses at least one
point of $I_r$. Since
$\pi_r(j)\ge 2^{-r}/(3-2^{1-r})$ for every $j\in I_r$, it follows that
\[
 \PP(|f_i(a)+Z_i|<r)\le1-\frac{2^{-r}}{3-2^{1-r}}.
\]
The coordinates of $Z$ are independent, so
\[
 \PP(\|a+UZ\|_E<r)
 \le\prod_{i=1}^k\PP(|f_i(a)+Z_i|<r)
 \le\left(1-\frac{2^{-r}}{3-2^{1-r}}\right)^k.
\]
Since $a$ was arbitrary, taking the supremum of
$\PP(\|a+UZ\|_E<r)$ over $a\in E$ proves \eqref{eq:small-ball}.
\end{proof}

We now use the cubical points supplied by $U$ in
\Cref{lem:obstruction}. Let $E$ be a finite-dimensional normed space,
let $Y$ be a closed connected oriented manifold, possibly a point,
and let
$F:S_E\times Y\to S_{\ell_2^{\dim E+\dim Y}}$ be continuous with nonzero
degree. Suppose that a linear map $U:\ell_\infty^k\to E$, $k\ge2$,
satisfies
\begin{equation}\label{eq:cube-embedding}
 \|z\|_\infty\le\|Uz\|_E\le D\|z\|_\infty
 \qquad(z\in\ell_\infty^k).
\end{equation}
For an integer $r\ge1$, apply \Cref{lem:obstruction} to the chain
$\mathcal P_{r,k}$ on $I_r^k$ and to the points
$v_z=Uz$, $z\in I_r^k$. By \eqref{eq:product-step} and
\eqref{eq:cube-embedding}, every transition moves these points by at
most $D$. Combining \Cref{lem:obstruction,lem:product,lem:small-ball}
with $R=r$ gives
\begin{equation}\label{eq:quantitative-cube}
 \begin{aligned}
 \omega_F(2D/r)^2
 &\ge 2\gamma_{_{\mathcal P_{r,k}}}
 \left[1-\left(1-\frac{2^{-r}}{3-2^{1-r}}\right)^k\right]
 -8\left(1-\frac{2^{-r}}{3-2^{1-r}}\right)^k\\
 &\ge\frac{1-257\left(1-\frac{2^{-r}}{3-2^{1-r}}\right)^k}{32}.
 \end{aligned}
\end{equation}
The bound is independent of the dimension of $E$ and of the parameter
space $Y$. The degree hypothesis concerns $F$ on $S_E\times Y$; the
cubical image $U(\ell_\infty^k)$ is used only to supply the points in
\Cref{lem:obstruction}.

\subsection{Arbitrary pavings of $c_0$}\label{sec:products}

The fact that the center in \eqref{eq:quantitative-cube} may range over
the whole space is essential here.
 A paving of $c_0$ need not contain any coordinate cube exactly.
Density supplies an approximate cube in one paving space, while the
degree hypothesis remains on the whole sphere of that paving space.

\begin{theorem}\label{thm:paving}
Let $(V_j)$ be a paving of $c_0$. For each $j$ with $\dim V_j\ge2$,
let $F_j:S_{V_j}\to S_{\ell_2^{\dim V_j}}$ be a continuous map of
nonzero degree. Then the maps $(F_j)$ have no common modulus of
uniform continuity.
\end{theorem}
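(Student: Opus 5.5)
We argue by contradiction: assume a common modulus $\omega$ with $\omega_{F_j}\le\omega$ for all $j$, and apply \eqref{eq:quantitative-cube} with $E=V_j$, $Y$ a point, and $D$ a fixed constant (say $D=2$). First fix $r$ large enough that $\omega(2D/r)^2<1/64$. Then fix $k\ge2$ large enough that
\[
257\Bigl(1-\tfrac{2^{-r}}{3-2^{1-r}}\Bigr)^k<\tfrac12 .
\]
With these choices the right-hand side of \eqref{eq:quantitative-cube} exceeds $1/64$. So it suffices to find, for some $j$ with $\dim V_j\ge2$, a linear map $U:\ell_\infty^k\to V_j$ satisfying \eqref{eq:cube-embedding} with $D=2$. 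Then $\omega(2D/r)^2\ge\omega_{F_j}(2D/r)^2>1/64$, which is a contradiction.

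The construction of $U$ is the main step, and it is where density of the paving enters. Take the unit vectors $e_1,\dots,e_k$ of $c_0$, with disjoint supports. Let $\varepsilon>0$ be small. Since $\bigcup_j V_j$ is dense and the $V_j$ increase, there is a single $j$ and vectors $u_1,\dots,u_k\in V_j$ with $\|u_i-e_i\|_\infty<\varepsilon$. Enlarging $j$ if necessary, we may also assume $\dim V_j\ge2$.

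Define $U_0z=\sum_i z_ie_i$, an isometry from $\ell_\infty^k$ into $c_0$, and set $Uz=\sum_i z_iu_i$. Then
\[
\|Uz-U_0z\|_\infty\le\sum_i|z_i|\,\|u_i-e_i\|_\infty\le k\varepsilon\|z\|_\infty .
\]
Hence
\[
(1-k\varepsilon)\|z\|_\infty\le\|Uz\|\le(1+k\varepsilon)\|z\|_\infty .
\]
Take $\varepsilon=1/(4k)$, and replace $U$ by $U/(1-k\varepsilon)=\tfrac43U$. This rescaled map satisfies \eqref{eq:cube-embedding} with
\[
D=\frac{1+k\varepsilon}{1-k\varepsilon}=\frac53\le2 .
\]
Here the norm on $V_j$ is the one inherited from $c_0$, as required.

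Note that the order of quantifiers matters: $r$ and then $k$ are chosen from $\omega$ alone, and only afterwards is $j$ chosen, depending on $k$. This is legitimate because \eqref{eq:quantitative-cube} is uniform in $\dim E$. It is also uniform in the center $a$, which ranges over all of $V_j$ by \Cref{lem:small-ball}; this is exactly why the approximate cube $U(\ell_\infty^k)$ need not be a coordinate cube, nor complemented in $V_j$. With these checks the contradiction is complete, so the $(F_j)$ admit no common modulus of uniform continuity.
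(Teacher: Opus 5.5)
Your proposal is correct and follows essentially the same approach as the paper's proof. You argue by contradiction, choose $r$ and then $k$ from $\omega$ alone, use density to place an approximate copy of $\ell_\infty^k$ in a single $V_j$, and invoke \eqref{eq:quantitative-cube} with $Y$ a point. The only difference is the normalization: you take $\|u_i-e_i\|_\infty<1/(4k)$ and rescale by $4/3$ to get distortion $5/3\le2$, whereas the paper takes $\sum_i\|v_i-e_i\|_\infty<1/2$ and $Uz=2\sum_i z_iv_i$ to get $D=3$.
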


\begin{proof}
Suppose that $\omega$ is a common modulus. Choose an integer $r\ge1$
such that $\omega(6/r)<1/8$. Since
$0<1-2^{-r}/(3-2^{1-r})<1$, we may then choose $k\ge2$ such that
\[\left(1-2^{-r}/(3-2^{1-r})\right)^k<\frac{1}{514}.\]

Let $e_1,\ldots,e_k$ be the first $k$ vectors of the canonical
unit vector basis of $c_0$. Since the paving is increasing and has
dense union, we may choose $j$ and $v_1,\ldots,v_k\in V_j$ such that \[\sum_{i=1}^k\|v_i-e_i\|_\infty<\frac12.\]
Define $U:\ell_\infty^k\to V_j$ by $Uz=2\sum_{i=1}^kz_iv_i.$
For every $z\in\ell_\infty^k$,
\[
 \begin{aligned}
 \left\|\frac{Uz}{2}-\sum_{i=1}^kz_ie_i\right\|_\infty
 &=\left\|\sum_{i=1}^kz_i(v_i-e_i)\right\|_\infty\\
 &\le\|z\|_\infty\sum_{i=1}^k\|v_i-e_i\|_\infty
 <\frac12\|z\|_\infty.
 \end{aligned}
\]
Since $\|\sum_{i=1}^kz_ie_i\|_\infty=\|z\|_\infty$, the triangle
inequality now gives
\[
 \|z\|_\infty\le\|Uz\|_\infty\le3\|z\|_\infty.
\]
Thus $U$ is injective, and hence $\dim V_j\ge k$.
Apply \eqref{eq:quantitative-cube} with $E=V_j$, $F=F_j$, $D=3$,
and $Y$ a point. Since $\omega_{F_j}\le\omega$, we obtain
\[
 \omega(6/r)^2
 \ge\omega_{F_j}(6/r)^2
 \ge\frac{1-257\left(1-2^{-r}/(3-2^{1-r})\right)^k}{32}
 >\frac1{64},
\]
contrary to the choice of $r$. The nonzero-degree hypothesis is imposed
on $F_j$ over the whole sphere $S_{V_j}$, while $U$ is used only to
provide the points $Uz$, $z\in I_r^k$, needed in
\Cref{lem:obstruction}.
\end{proof}

\begin{proof}[Proof of \Cref{thm:main}]
Suppose that $c_0$ has rational Property~$(H)$, witnessed by pavings
$(V_j)$ and $(H_j)$ and a map $\psi$. For each $j$, choose a linear
isometry $T_j:H_j\to\ell_2^{\dim V_j}$. Then
\[
 F_j=T_j\circ\psi|_{S_{V_j}}:
 S_{V_j}\to S_{\ell_2^{\dim V_j}}
\]
has nonzero degree. Since the $T_j$ are isometries, the maps $(F_j)$
have the common modulus
$\omega_\psi$, contradicting \Cref{thm:paving}. Thus $c_0$ does not
have rational Property~$(H)$. Since Property~$(H)$ implies rational
Property~$(H)$, $c_0$ has neither rational Property~$(H)$ nor
Property~$(H)$.
\end{proof}

\section{Sharp lower bounds for maps from cubical spheres}\label{sec:lower}

We now turn to the quantitative sphere formulations associated with
the problems of Gromov and Johnson,
including Gromov's stabilized formulation. The finite-chain argument from the
preceding section already forces a fixed oscillation at distances of order
$1/\log n$ and hence gives the correct logarithmic scale. To recover the
sharp constant $1/2$, we replace the finite chains by product Laplace measure
and combine the degree-centering argument with coordinatewise truncation.

\subsection{Uniform continuity under stabilization}

Taking $E=\ell_\infty^n$ in \eqref{eq:quantitative-cube} gives the
following estimate, uniformly over all such parameter manifolds.

\begin{theorem}\label{thm:stable-modulus}
Let $n\ge1024$, and let $Y$ be a closed connected oriented
$m$-dimensional manifold, with $Y$ a point when $m=0$. Every continuous map
$F:S_{\ell_\infty^n}\times Y\to S_{\ell_2^{n+m}}$ of nonzero degree satisfies
\begin{equation}\label{eq:stable-modulus}
 \omega_F\left(\frac{8\log 2}{\log n}\right)>\frac18.
\end{equation}
\end{theorem}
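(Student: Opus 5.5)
We apply \eqref{eq:quantitative-cube} with $E=\ell_\infty^n$, $k=n$, and $U$ the identity on $\ell_\infty^n$. Then \eqref{eq:cube-embedding} holds with $D=1$. The degree hypothesis is exactly the standing assumption of \Cref{lem:obstruction} with $\dim E=n$ and $\dim Y=m$. For every integer $r\ge1$ this gives
\[
 \omega_F(2/r)^2\ge\frac{1-257\,q_r^{\,n}}{32},
 \qquad q_r=1-\frac{2^{-r}}{3-2^{1-r}}.
\]
The plan is to choose $r$ of order $\log n/(4\log2)$. Then $2/r\le 8\log2/\log n$, and $q_r^n$ is small enough that the right side exceeds $1/64$. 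This yields $\omega_F>1/8$ at the required scale, because $\omega_F$ is nondecreasing.

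Concretely, we take $r=\lfloor \log n/(4\log 2)\rfloor=\lfloor\log_2 n/4\rfloor$. Since $n\ge1024$, we have $\log_2 n\ge10$, so $r\ge2$. We also need $r\ge \log_2 n/4-1\ge \log_2 n/8$, which holds precisely when $\log_2 n\ge 8$. Hence $2/r\le 16/\log_2 n=16\log2/\log n$.

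This is off by a factor of two from the target, so the choice of $r$ must be refined. The constraint $2/r\le 8\log2/\log n$ is equivalent to $r\ge \log_2 n/4$, so we instead take $r=\lceil \log_2 n/4\rceil$. Then $r\le \log_2 n/4+1$, and therefore
\[
 2^{-r}\ge \tfrac12 n^{-1/4}.
\]
Using $3-2^{1-r}<3$ gives $1-q_r\ge n^{-1/4}/6$. Hence
\[
 q_r^n\le \exp\!\bigl(-n^{3/4}/6\bigr).
\]
For $n\ge1024$ we have $n^{3/4}\ge 2^{7.5}>181$, so $q_r^n\le e^{-30}$. This is far below $1/514$, and the displayed bound gives $\omega_F(2/r)^2>1/64$.

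By monotonicity, $\omega_F(8\log2/\log n)\ge\omega_F(2/r)>1/8$, which is \eqref{eq:stable-modulus}.

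The only delicate point is the bookkeeping in the choice of $r$. We must round up so that $2/r$ lies below the prescribed scale, and then check that the resulting loss in $2^{-r}$ still leaves $n\cdot 2^{-r}$ large. The threshold $n\ge1024$ is ample for both requirements. Everything else is a direct specialization of \eqref{eq:quantitative-cube}, whose derivation already covers an arbitrary parameter manifold $Y$.
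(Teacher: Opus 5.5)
Your proof is correct and follows the paper's argument: you specialize \eqref{eq:quantitative-cube} to $E=\ell_\infty^n$, $k=n$, $U=I$ (so $D=1$) and then use the monotonicity of $\omega_F$. The only difference is the choice of $r$: the paper takes $r=\lfloor\tfrac12\log_2 n\rfloor$, which still satisfies $r\ge\tfrac14\log_2 n$ for $n\ge1024$ and gives $q_r^{\,n}\le e^{-\sqrt n/3}<1/514$, while your $r=\lceil\tfrac14\log_2 n\rceil$ gives the smaller bound $e^{-n^{3/4}/6}$, and either choice suffices.
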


\begin{proof}[Proof of \Cref{thm:stable-modulus}]
Take $E=\ell_\infty^n$, $k=n$, $U=I$, and
$r=\lfloor\tfrac12\log_2 n\rfloor$. For $n\ge1024$, we have
$r\ge\tfrac14\log_2 n$. Since $3-2^{1-r}\le3$,
\[
 \left(1-\frac{2^{-r}}{3-2^{1-r}}\right)^n
 \le\exp\left(-\frac{n2^{-r}}{3-2^{1-r}}\right)
 \le\exp(-\sqrt n/3)<\frac1{514}.
\]
Now \eqref{eq:quantitative-cube} gives $\omega_F(2/r)>1/8$.
Since $2/r\le8\log 2/\log n$, monotonicity of $\omega_F$ proves the claim.
\end{proof}

In particular, for fixed $0<\alpha\le1$, every such map satisfying
$\omega_F(t)\le C t^\alpha$ must have
\[
 C>\frac18\left(\frac{\log n}{8\log 2}\right)^\alpha.
\]
This estimate also allows the parameter manifolds to vary with $n$.

\subsection{Laplace measure and coordinatewise truncation}\label{sec:laplace}

To obtain the sharp asymptotic lower bound, we use product Laplace measure.
We record the Poincar\'e inequality with its constant, since this
constant enters the final asymptotic. Let
$Z=(Z_1,\ldots,Z_n)$ have independent coordinates with density
$\tfrac12e^{-|t|}$. For any Lipschitz map $g:\R^n\to\R^N$,
\begin{equation}\label{eq:laplace-poincare}
 \Var(g(a+sZ))
 \le4s^2\E\sum_{i=1}^n|\partial_i g(a+sZ)|_2^2,
 \qquad a\in\R^n,\quad s>0.
\end{equation}
Here $\Var(V)=\E|V-\E V|_2^2$. The scalar inequality and its
tensorization are standard; see \cite{Ledoux}. To see the constant four,
take a smooth Lipschitz real function $h$. Integration by parts on the
positive half-line and Cauchy--Schwarz give
\[
 \begin{aligned}
 \int_0^\infty |h(t)-h(0)|^2e^{-t}\,dt
 &=2\int_0^\infty (h(t)-h(0))h'(t)e^{-t}\,dt\\
 &\le2\left(\int_0^\infty |h(t)-h(0)|^2e^{-t}\,dt\right)^{1/2}
       \left(\int_0^\infty |h'(t)|^2e^{-t}\,dt\right)^{1/2}.
 \end{aligned}
\]
The negative half-line is treated in the same way. The mean minimizes
squared distance to constants, so this proves the one-dimensional
inequality. Tensorization, followed by summation over the coordinates
of $g$, proves \eqref{eq:laplace-poincare}; translation and scaling give
the displayed form. Approximation extends it to Lipschitz functions.

For a linear map $T:\ell_\infty^n\to\ell_2^N$, let
$\varepsilon_1,\ldots,\varepsilon_n$ be independent uniform signs. Since
$\E(\varepsilon_i\varepsilon_j)=\delta_{ij}$,
\begin{equation}\label{eq:hilbert-derivative}
 \sum_{i=1}^n|Te_i|_2^2
 =\E_\varepsilon|T\varepsilon|_2^2
 \le\|T\|^2.
\end{equation}
Thus $\|T\|$ controls the sum of squared coordinate
derivatives without a dimension factor.

We will also use the following estimate for a homogeneous extension.
It uses only the fact that the values of the sphere map have Euclidean
norm one.

\begin{lemma}\label{lem:radial}
Let $E$ be a finite-dimensional normed space and let
$f:S_E\to S_{\ell_2^N}$ be $L$-Lipschitz. Define $Q:E\to\R^N$ by
$Q(0)=0$ and $Q(x)=\|x\|_E f(x/\|x\|_E)$ for $x\ne0$. Then
\[
 |Q(x)|_2=\|x\|_E,\qquad \Lip(Q)\le\sqrt{4L^2+1}.
\]
\end{lemma}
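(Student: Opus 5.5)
The identity $|Q(x)|_2=\|x\|_E$ is immediate, since $|f(u)|_2=1$ for $u\in S_E$. For the Lipschitz bound, fix $x,y\in E$. If one of them is $0$, say $y=0$, then $|Q(x)-Q(0)|_2=\|x\|_E$, which is within the claimed bound. So I assume $x,y\ne0$ and, by symmetry, $s=\|x\|_E\ge t=\|y\|_E>0$. Write $u=x/s$ and $w=y/t$. The plan is to split $Q(x)-Q(y)$ into a component along $f(u)$ and a component orthogonal to it. The point is that these two parts are controlled by different quantities, and they combine in quadrature rather than additively; this is what should produce $\sqrt{4L^2+1}$ instead of $2L+1$.

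The decomposition I would use is
\[
 Q(x)-Q(y)=(s-t)f(u)+t\bigl(f(u)-f(w)\bigr).
\]
This alone gives only $|s-t|+tL\|u-w\|_E$, which is additive. To get the quadrature, I would instead project onto the direction $f(u)$ and onto its orthogonal complement. Since $|f(u)|_2=|f(w)|_2=1$, the difference $f(u)-f(w)$ satisfies
\[
 \langle f(u)-f(w),f(u)\rangle=\tfrac12|f(u)-f(w)|_2^2\ge0 .
\]
A cleaner route is to use the symmetric identity for unit vectors $p=f(u)$ and $q=f(w)$:
\[
 |sp-tq|_2^2=(s-t)^2+st|p-q|_2^2 ,
\]
which follows by expanding and using $2\langle p,q\rangle=2-|p-q|_2^2$. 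This is exactly a quadrature splitting.

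It remains to bound the two terms. The first term satisfies $(s-t)^2\le\|x-y\|_E^2$. For the second, $st|p-q|_2^2\le stL^2\|u-w\|_E^2$. By the normalization estimate \eqref{eq:normalization}, $\|u-w\|_E\le 2\|x-y\|_E/s$. Hence
\[
 st\|u-w\|_E^2\le \frac{4t}{s}\|x-y\|_E^2\le4\|x-y\|_E^2 ,
\]
using $t\le s$. Adding the two bounds gives
\[
 |Q(x)-Q(y)|_2^2\le(1+4L^2)\|x-y\|_E^2 ,
\]
which is the claimed estimate. I expect the only real obstacle to be finding the identity $|sp-tq|^2=(s-t)^2+st|p-q|^2$; once it is in hand, the rest follows directly from \eqref{eq:normalization}.
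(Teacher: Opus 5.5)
Your proof is correct and is the same as the paper's. The paper also expands to $|Q(x)-Q(z)|_2^2=(a-b)^2+ab\,|f(u)-f(v)|_2^2$ and bounds $(a-b)^2\le\|x-z\|_E^2$. It then uses $\|u-v\|_E\le 2\|x-z\|_E/a$ with $b\le a$, exactly as you do; the exploratory decompositions you try before reaching this identity are unnecessary but harmless.
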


\begin{proof}
The norm identity follows from the definition. If one of $x$ and $z$
is zero, the Lipschitz estimate follows immediately from this identity.
We may therefore write $x=au$ and $z=bv$, where $u,v\in S_E$ and
$a\ge b>0$. Then $|a-b|\le\|x-z\|_E$ and
$\|u-v\|_E\le2\|x-z\|_E/a$. Since $|f(u)|_2=|f(v)|_2=1$,
\[
 \begin{aligned}
  |Q(x)-Q(z)|_2^2
   &=(a-b)^2+ab|f(u)-f(v)|_2^2\\
   &\le\|x-z\|_E^2+4L^2(b/a)\|x-z\|_E^2\\
   &\le(4L^2+1)\|x-z\|_E^2.
 \end{aligned}
\]
\end{proof}

For an $L$-Lipschitz map from $S_{\ell_\infty^n}$ to a Euclidean unit sphere, we
compose its homogeneous extension with the coordinatewise
truncation $C_R(x)_i=\max\{-1,\min\{x_i/R,1\}\}$.
Outside the cube $[-R,R]^n$, the truncation map takes values in
$S_{\ell_\infty^n}$, so the composition uses the sphere map itself.
Its Lipschitz bound there is $L/R$, rather than
$\sqrt{4L^2+1}/R$. The larger derivative bound is therefore weighted
only by the probability of falling inside the cube.

\subsection{The sharp lower bound}\label{sec:sharp}

The next estimate gives the sharp asymptotic constant. It requires only
a Lipschitz bound in the first variable; the parameter manifold is
otherwise arbitrary. Since coordinatewise truncation changes the extension,
we verify the degree of its averaged boundary map within the proof.

\begin{theorem}\label{thm:sharp-parameter}
Let $n\ge2$, let $Y$ be a closed connected oriented $m$-dimensional
manifold, with $Y$ a point when $m=0$, and let
$F:S_{\ell_\infty^n}\times Y\to S_{\ell_2^{n+m}}$ be continuous with nonzero degree. Suppose
\[
 L=\sup_{y\in Y}\Lip(F(\,\cdot\,,y))<\infty.
\]
For every $R>0$, put $\varepsilon_n(R)=(1-e^{-R})^n$. Then
\begin{equation}\label{eq:sharp-finite}
 L^2\ge
 \frac{\bigl[R^2(1-\varepsilon_n(R))-4\varepsilon_n(R)\bigr]_+}
 {4(1+3\varepsilon_n(R))},
 \qquad [a]_+=\max\{a,0\}.
\end{equation}
Consequently, if $L_n$ denotes the corresponding value of $L$ for a
sequence of such maps, then $\liminf\limits_{n\to\infty}\frac{L_n}{\log n}\ge1/2$,
even when the parameter manifolds vary with $n$.
\end{theorem}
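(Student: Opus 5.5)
The plan is to rerun the degree–Poincaré argument of \Cref{lem:obstruction}, replacing the finite chain by product Laplace measure and the radial cutoff $\Theta_{F,R}$ by the coordinatewise truncation. For each $y\in Y$, let $Q_y$ be the homogeneous extension of $F(\cdot,y)$ from \Cref{lem:radial}, with $\Lip(Q_y)\le\sqrt{4L^2+1}$. Define
\[
 G(x,y)=Q_y(C_R(x)),\qquad x\in\R^n,\ y\in Y .
\]
Three facts about $G$ are needed.
\begin{enumerate}
\item $G$ is continuous and $|G|_2\le1$.
\item If $x\notin(-R,R)^n$, then $C_R(x)\in S_{\ell_\infty^n}$. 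Hence $G(x,y)=F(C_R(x),y)$ is a unit vector, and $G(\cdot,y)$ is locally $L/R$-Lipschitz from $\ell_\infty^n$ there.
\item Inside the cube, $G(\cdot,y)$ is $\sqrt{4L^2+1}/R$-Lipschitz.
\end{enumerate}
Since $\Lip$ is taken with respect to $\ell_\infty^n$, \eqref{eq:hilbert-derivative} applied to the a.e.\ derivative gives
\[
 \sum_i|\partial_iG(x,y)|_2^2\le L^2/R^2
\]
outside the cube and $\le(4L^2+1)/R^2$ inside.

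The first step is centering. I want $a\in\R^n$ and $y_0\in Y$ with $\E G(a+Z,y_0)=0$, where $Z$ has the Laplace law (scale $s=1$). I would mimic \Cref{lem:centering} with $A(a,y)=\E G(a+Z,y)$. First replace $Z$ by a truncated version $Z\mathbf 1_{\{|Z|_\infty\le M\}}$ (or a fine finitely supported approximation), and let $M\to\infty$ at the end using compactness and continuity of $A$. On $\partial[-T,T]^n\times Y$ with $T\gg M+R$, I then show that $A/|A|$ is homotopic to $F(C_R(\cdot),y)$, through the scaling homotopy $A_t(a,y)=\E G(a+tZ,y)$, $t\in[0,1]$. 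The map $a\mapsto C_R(a)$ on $\partial[-T,T]^n$ is homotopic to $a/T$ by the straight-line homotopy in $S_{\ell_\infty^n}$-normalized form: both points share a coordinate equal to $\pm1$ with the same sign, so the convex combination never vanishes. So the boundary map has degree $\deg F\neq0$, and \eqref{eq:boundary-class} forces a zero of $A$ in $[-T,T]^n\times Y$.

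\emph{Main obstacle.} The delicate point is proving that $A_t$ does not vanish on the boundary. On a face $a_i=T$, every sample lands in $\{w_i=1\}$, but the other coordinates of $C_R(a+tZ)$ still vary, so $G(a+tZ)$ is not close to $G(a)$. My first attempt is to show that on such a face all sampled values $F(w,y)$ lie in the image of the face $\{w_i=1\}$ under $F$, which is convex-nonvanishing after a preliminary homotopy of $F$. If that fails, I would instead build the extension from the deterministic face structure and run the lemma with a general uniformly continuous, eventually-spherical extension. I expect this step to take the most care.

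The second step is the variance estimate. At the centering point, \eqref{eq:laplace-poincare} gives
\[
 \E|G(a+Z,y_0)|_2^2\le 4\,\E\sum_i|\partial_iG(a+Z,y_0)|_2^2 .
\]
Set
\[
 \varepsilon=\sup_b\PP(b+Z\in(-R,R)^n).
\]
Each coordinate interval of length $2R$ has Laplace mass at most $1-e^{-R}$ (maximized when centered), so independence gives $\varepsilon\le\varepsilon_n(R)$. The left side is at least $1-\varepsilon$. The right side is at most
\[
 4\bigl[(L^2/R^2)(1-\varepsilon)+((4L^2+1)/R^2)\varepsilon\bigr].
\]
Rearranging,
\[
 R^2(1-\varepsilon)\le4L^2(1+3\varepsilon)+4\varepsilon .
\]
Both sides are monotone in $\varepsilon$ in the right direction, so we may substitute $\varepsilon_n(R)$. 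This gives \eqref{eq:sharp-finite}.

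For the asymptotic consequence, take $R=(1-\delta)\log n$. Then $\varepsilon_n(R)\le\exp(-n e^{-R})=\exp(-n^{\delta})\to0$, so
\[
 \liminf_{n\to\infty} L_n/\log n\ge(1-\delta)/2
\]
for every $\delta>0$. None of these bounds involve $Y$, so the conclusion holds even when the parameter manifolds vary with $n$.
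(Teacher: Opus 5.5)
\textbf{There is a genuine gap at the step you flag as the main obstacle, and neither proposed repair closes it.} Most of your outline matches the paper's proof: the same map $G=u_R$, the same scaling homotopy $t\mapsto\E\,G(a+tZ,y)$, the same homotopy from $F(C_R(\rho\,\cdot\,),y)$ back to $F$, and the same small-ball bound and rearrangement. What you do not have is any argument that the averaged map stays nonzero on the boundary of a large cube.

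Geometric information about faces cannot supply this. The image of a face $\{w_i=1\}$ under $F$ need not lie in an open hemisphere. When $R$ is small compared with $L$, for example when a face is mapped onto more than a semicircle and $C_R(a+tZ)$ puts most of its mass near that face's vertices, the face averages can actually vanish. A preliminary homotopy of $F$ does not help either. The zero you need is a zero of the average of $u_R$ built from the original $F$, because its Lipschitz data enter the variance bound. Truncating $Z$ adds a further problem: your cube $[-T,T]^n$ grows with $M$, so the zeros need not stay in a compact set as $M\to\infty$.

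\textbf{The missing idea is to feed the Poincar\'e estimate back into the homotopy, after a case split.} If $R\le2L$, the right side of \eqref{eq:sharp-finite} is at most $R^2/4\le L^2$, so there is nothing to prove; assume $R>2L$. Let $p_{a,s}=\PP(\|a+sZ\|_\infty<R)$. Applying \eqref{eq:laplace-poincare} at scale $s$ gives
\[
 \Var(G(a+sZ,y))\le\frac{4s^2}{R^2}\bigl[L^2+(3L^2+1)p_{a,s}\bigr],
\]
while $\E|G(a+sZ,y)|_2^2\ge1-p_{a,s}$. Subtracting,
\[
 |\E\,G(a+sZ,y)|_2^2\ge1-\frac{4L^2}{R^2}-\Bigl(1+\frac{4(3L^2+1)}{R^2}\Bigr)p_{a,s}\qquad(0<s\le1).
\]
For $a=\rho x$ with $x\in S_{\ell_\infty^n}$, some coordinate has $|x_j|=1$. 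The one-dimensional Laplace tail then gives $p_{\rho x,s}\le\frac12e^{-(\rho-R)}$. Choose $\rho>R$ large, depending only on $L$ and $R$. Then the right side is at least $\frac12(1-4L^2/R^2)>0$, uniformly in $s\in[0,1]$, $x$ and $y$.

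This lets you use the full, untruncated Laplace vector, with continuity by dominated convergence. Your two homotopies then show that $A(a,y)=\E\,G(a+Z,y)$ has a zero in $[-\rho,\rho]^n\times Y$. From there the rest of your argument goes through as written.
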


\begin{proof}[Proof of \Cref{thm:sharp-parameter}]
For each $y\in Y$, extend $F(\,\cdot\,,y)$ homogeneously by setting
$Q_y(0)=0$ and
\[
 Q_y(x)=\|x\|_\infty F(x/\|x\|_\infty,y)\qquad(x\ne0).
\]
By \Cref{lem:radial},
\[
 |Q_y(x)|_2=\|x\|_\infty,
 \qquad \Lip(Q_y)\le\sqrt{4L^2+1}.
\]
We now bring the whole space back to the cube by coordinatewise
truncation. Namely, let
\[
 C_R(x)_i=\max\{-1,\min\{x_i/R,1\}\},
 \qquad u_R(x,y)=Q_y(C_R(x)).
\]
The map $u_R$ is jointly continuous and satisfies
\begin{equation}\label{eq:clipped-norm}
 |u_R(x,y)|_2=\min\{\|x\|_\infty/R,1\}.
\end{equation}
For fixed $y$, the global Lipschitz constant of $u_R(\,\cdot\,,y)$ is
at most $\sqrt{4L^2+1}/R$. The point of the truncation is that the
better bound $L/R$ holds outside $[-R,R]^n$. Indeed, there
$C_R(x)\in S_{\ell_\infty^n}$ and
$u_R(x,y)=F(C_R(x),y)$, while $C_R$ is $1/R$-Lipschitz.

Let $Z=(Z_1,\ldots,Z_n)$ have independent coordinates with density
$e^{-|t|}/2$ on $\R$, and put
$p_{a,s}=\PP(\|a+sZ\|_\infty<R)$. Rademacher's theorem applies to
$u_R(\,\cdot\,,y)$. At almost every point outside the cube, its
derivative, viewed from $\ell_\infty^n$ to $\ell_2^{n+m}$, has norm
at most $L/R$; inside the cube, the corresponding bound is
$\sqrt{4L^2+1}/R$. Since the boundary of the cube has probability
zero, \eqref{eq:laplace-poincare} and \eqref{eq:hilbert-derivative}
give
\begin{equation}\label{eq:sharp-variance}
 \begin{aligned}
 \Var(u_R(a+sZ,y))
 &\le\frac{4s^2}{R^2}
   \bigl[L^2(1-p_{a,s})+(4L^2+1)p_{a,s}\bigr]\\
 &=\frac{4s^2}{R^2}
   \bigl[L^2+(3L^2+1)p_{a,s}\bigr]\qquad(s>0).
 \end{aligned}
\end{equation}
No differentiation in the parameter is involved. On the other hand,
\eqref{eq:clipped-norm} gives
$\E|u_R(a+sZ,y)|_2^2\ge1-p_{a,s}$. Subtracting the variance from this
second moment, we obtain
\begin{equation}\label{eq:sharp-mean}
 |\E u_R(a+sZ,y)|_2^2
 \ge1-\frac{4s^2L^2}{R^2}
 -\left(1+\frac{4s^2(3L^2+1)}{R^2}\right)p_{a,s}.
\end{equation}

We next show that the average on the left must vanish at some
translate. Assume first that $R>2L$, and choose $\rho>R$ so large that
\[
 \left(1+\frac{4(3L^2+1)}{R^2}\right)
 \frac12e^{-(\rho-R)}
 <\frac12\left(1-\frac{4L^2}{R^2}\right).
\]
For $x\in S_{\ell_\infty^n}$, $y\in Y$, and $0\le s\le1$, consider
\[
 H_s(x,y)=\E u_R(\rho x+sZ,y).
\]
These maps are jointly continuous by dominated convergence.
We claim that none of them vanishes. Indeed, if $|x_j|=1$, then for
$s>0$ the one-dimensional Laplace tail gives
\[
 p_{\rho x,s}\le\tfrac12e^{-(\rho-R)/s}
 \le\tfrac12e^{-(\rho-R)}.
\]
It follows from \eqref{eq:sharp-mean} and the choice of $\rho$ that
\[
 \begin{aligned}
 |H_s(x,y)|_2^2
 &\ge 1-\frac{4L^2}{R^2}
 -\left(1+\frac{4(3L^2+1)}{R^2}\right)
       \frac12e^{-(\rho-R)}\\
 >\frac12\left(1-\frac{4L^2}{R^2}\right)>0.
 \end{aligned}
\]
The same conclusion is immediate for $s=0$, since $H_0(x,y)$ has
norm one. We may therefore normalize $H_s$ throughout the homotopy.
At its initial point,
\[
 \frac{H_0(x,y)}{|H_0(x,y)|_2}=F(C_R(\rho x),y).
\]
This map has the same degree as $F$. To see this, let $\phi_t(x)$ be
the coordinatewise truncation to $[-1,1]$ of
$\bigl(1+t(\rho/R-1)\bigr)x$, $0\le t\le1$.
At least one coordinate of $x$ has absolute value one, so every
$\phi_t(x)$ lies in $S_{\ell_\infty^n}$. Moreover,
$\phi_0(x)=x$ and $\phi_1(x)=C_R(\rho x)$. Thus $F(\phi_t(x),y)$
joins $F$ to the initial normalized average. The normalized maps
$H_s/|H_s|_2$ then give a second homotopy, and hence
\[
 \deg\left(\frac{H_1}{|H_1|_2}\right)=\deg F\ne0.
\]

This boundary map cannot extend over
$B_{\ell_\infty^n}(\rho)\times Y$. But the map
$A(a,y)=\E u_R(a+Z,y)$ is continuous by dominated convergence, and
its restriction to the boundary, after the identification
$(x,y)\mapsto(\rho x,y)$, is $H_1$. If $A$ never vanished, its
normalization would provide precisely such an extension, contradicting
\eqref{eq:boundary-class}. We have therefore found $(a_0,y_0)$ such
that
\begin{equation}\label{eq:sharp-zero}
 \E u_R(a_0+Z,y_0)=0.
\end{equation}

It remains to estimate how often a translate of $Z$ can fall inside
the cube. Among intervals of length $2R$, the centered interval
$(-R,R)$ has the largest Laplace measure, because the density is
symmetric and nonincreasing away from zero. Independence therefore
gives
\begin{equation}\label{eq:laplace-smallball}
 p_{a,1}\le(1-e^{-R})^n=\varepsilon_n(R)\qquad(a\in\R^n).
\end{equation}
At the zero in \eqref{eq:sharp-zero}, the second moment is the
variance. Hence \eqref{eq:clipped-norm} and
\eqref{eq:sharp-variance} give
\[
 1-p_{a_0,1}\le\E|u_R(a_0+Z,y_0)|_2^2
 \le\frac4{R^2}\bigl[L^2+(3L^2+1)p_{a_0,1}\bigr].
\]
Rearranging,
\[
 L^2\ge
 \frac{R^2(1-p_{a_0,1})-4p_{a_0,1}}
      {4(1+3p_{a_0,1})}.
\]
The last quotient decreases as $p_{a_0,1}$ increases. Together with
\eqref{eq:laplace-smallball} and $L^2\ge0$, this proves
\eqref{eq:sharp-finite} when $R>2L$. When $R\le2L$, the right-hand
side of \eqref{eq:sharp-finite} is at most $R^2/4\le L^2$, so the same
estimate remains valid.

To obtain the asymptotic statement, fix $0<\delta<1$ and take
$R=(1-\delta)\log n$. Then
$\varepsilon_n(R)\le e^{-n^\delta}\to0$, and \eqref{eq:sharp-finite}
gives
\[
 \liminf_{n\to\infty}\frac{L_n^2}{(\log n)^2}
 \ge\frac{(1-\delta)^2}{4}.
\]
Taking square roots and then letting $\delta\to 0$ completes the
proof.
\end{proof}

\section{Matching upper bounds and stabilization}\label{sec:upper}

It remains to show that the lower bound from \Cref{sec:lower} is sharp. We
first construct explicit homeomorphisms
$S_{\ell_\infty^n}\to S_{\ell_2^n}$ with Lipschitz constant
$\frac12\log n+O(1)$. We then use a circle parameter to obtain the same
asymptotic upper bound in Gromov's stabilized problem.

\subsection{Explicit homeomorphisms of spheres}

We begin by constructing the homeomorphisms in \Cref{thm:gromov}. The
choice of the scalar function below balances the derivatives of its
coordinates against the norm used for normalization. Fix $n\ge2$ and put
\[
 a=\arsinh\!\bigl(\sqrt{n-1}\bigr).
\]
Here $\arsinh$, $\sinh$, and $\cosh$ denote the inverse hyperbolic sine,
the hyperbolic sine, and the hyperbolic cosine, respectively.
Define an odd, continuous, strictly increasing function $g:\R\to\R$
by
\[
 g(t)=
 \begin{cases}
  \displaystyle\frac{\sinh\bigl((a+1)t\bigr)}{\sqrt{n-1}},
      &0\le t\le\dfrac{a}{a+1},\\[6pt]
  \cosh\bigl((a+1)t-a\bigr),&t\ge\dfrac{a}{a+1},
 \end{cases}
 \qquad g(-t)=-g(t).
\]
The two formulas agree at $a/(a+1)$, where both equal one. For $z\ne0$,
set
\begin{equation}\label{eq:upper-map}
 \Phi_n(z)=\frac{(g(z_1),\ldots,g(z_n))}
                   {\bigl(\sum_i g(z_i)^2\bigr)^{1/2}}.
\end{equation}
Although we only need the restriction of $\Phi_n$ to
$S_{\ell_\infty^n}$, we define it on $\R^n\setminus\{0\}$ because the
line segment joining two points of the sphere may leave the sphere.
This allows us to estimate their images by integrating the derivative
of $\Phi_n$ along that segment.

\begin{lemma}\label{lem:upper-map}
The map $\Phi_n:S_{\ell_\infty^n}\to S_{\ell_2^n}$ is a homeomorphism
and satisfies
\[
 \Lip(\Phi_n)\le 1+\arsinh\!\bigl(\sqrt{n-1}\bigr).
\]
\end{lemma}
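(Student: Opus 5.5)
The plan has two parts: show that $\Phi_n$ is a homeomorphism, using only the monotonicity of $g$, and then bound its Lipschitz constant by integrating the derivative of \eqref{eq:upper-map} along line segments. The whole estimate rests on one pointwise identity for $g'$, which is what makes the constant exactly $a+1$.

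\emph{Homeomorphism.} Write $G(z)=(g(z_1),\ldots,g(z_n))$. Since $g$ is odd, continuous, strictly increasing and unbounded, it is a bijection of $\R$. Hence $G$ is a homeomorphism of $\R^n$ fixing $0$, and $\Phi_n=G/|G|_2$ is continuous on $S_{\ell_\infty^n}$.
\begin{itemize}
\item For injectivity and surjectivity, fix $w\in S_{\ell_2^n}$ and consider the curve $t\mapsto G^{-1}(tw)=(g^{-1}(tw_i))_i$ for $t>0$.
\item Its $\ell_\infty$-norm $\max_i g^{-1}(t|w_i|)$ is continuous and strictly increasing in $t$, and runs from $0$ to $\infty$.
\item So there is exactly one $t$ with $G^{-1}(tw)\in S_{\ell_\infty^n}$, and $\Phi_n(z)=w$ forces $z$ to be this point.
\end{itemize}
Thus $\Phi_n$ is a continuous bijection between compact Hausdorff spaces, hence a homeomorphism.

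\emph{Key pointwise estimate.} Let $z$ satisfy $\|z\|_\infty\ge a/(a+1)$, so at least one coordinate lies in the outer regime. I claim
\[
\sum_i g'(z_i)^2\le (a+1)^2\sum_i g(z_i)^2,
\]
where $g'$ is taken as either one-sided derivative at the junction points $\pm a/(a+1)$, where $g$ is only piecewise $C^1$.
\begin{itemize}
\item On the outer piece, $\cosh'=\sinh$ and $\sinh^2=\cosh^2-1$ give $g'^2=(a+1)^2(g^2-1)$.
\item On the inner piece, $\cosh^2=1+\sinh^2$ and $\sinh^2 a=n-1$ give $g'^2=(a+1)^2\bigl(g^2+\tfrac1{n-1}\bigr)$.
\end{itemize}
With at least one outer coordinate, there are at most $n-1$ inner ones. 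Each outer term contributes a deficit of $-(a+1)^2$ and each inner term a surplus of at most $(a+1)^2/(n-1)$. The total surplus is therefore at most $(a+1)^2$, and one outer deficit cancels it. This is the balancing built into the choice of $a$, and it is the heart of the proof.

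\emph{Derivative bound.} Away from the coordinate hyperplanes $z_i=\pm a/(a+1)$, differentiate $\Phi_n$ in a direction $h$. The derivative of $v\mapsto v/|v|_2$ at $v$ is the orthogonal projection onto $v^\perp$ divided by $|v|_2$, so
\[
|D\Phi_n(z)h|_2\le\frac{|(g'(z_i)h_i)_i|_2}{|G(z)|_2}
\le\|h\|_\infty\frac{(\sum_i g'(z_i)^2)^{1/2}}{|G(z)|_2}\le(a+1)\|h\|_\infty
\]
on the region $\{\|z\|_\infty\ge a/(a+1)\}$.

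\emph{Segments.} Let $x,y\in S_{\ell_\infty^n}$ and $z(s)=(1-s)x+sy$.
\begin{itemize}
\item Using a coordinate with $|x_i|=1$ and one with $|y_j|=1$ gives $\|z(s)\|_\infty\ge1-\tfrac12\|x-y\|_\infty$.
\item If $\|x-y\|_\infty\le2/(a+1)$, the whole segment stays in the good region. The map $s\mapsto\Phi_n(z(s))$ is absolutely continuous, because $g$ is piecewise smooth and the segment crosses the junction hyperplanes at finitely many $s$, unless it lies inside one, in which case the one-sided derivative bound still applies. Integrating then gives $|\Phi_n(x)-\Phi_n(y)|_2\le(a+1)\|x-y\|_\infty$.
\item Otherwise $|\Phi_n(x)-\Phi_n(y)|_2\le2<(a+1)\|x-y\|_\infty$.
\end{itemize}
This proves $\Lip(\Phi_n)\le1+\arsinh\sqrt{n-1}$.

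The main obstacle is the identity in the key pointwise estimate. The $\ell_\infty\to\ell_2$ derivative of $G$ must be controlled in Hilbert–Schmidt form by $|G|_2$ with constant exactly $a+1$, losing no $\sqrt2$. This only works because at least one coordinate lies in the outer regime. The segment argument is designed precisely to guarantee that condition holds along the path.
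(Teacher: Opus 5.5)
Your proposal is correct and follows the paper's route: the same balancing identity for $g'^2/(a+1)^2-g^2$, the same normalization derivative bound, and the same segment argument split at $\|x-y\|_\infty=2/(a+1)$; your bijectivity-plus-compactness argument for the homeomorphism is the paper's explicit inverse $y\mapsto\bigl(g^{-1}(\cosh(1)\,y_i/\|y\|_\infty)\bigr)_i$ in implicit form, since the unique $t$ you find is $\cosh(1)/\|w\|_\infty$. One small slip: at $\|z\|_\infty=a/(a+1)$ exactly, your pointwise estimate fails if the inner one-sided derivative is used at every maximal coordinate (e.g.\ all coordinates equal to $a/(a+1)$), so either use strict inequalities $\|z\|_\infty>a/(a+1)$ and $\|x-y\|_\infty<2/(a+1)$ as the paper does, or note that this boundary case occurs at most at the midpoint of the segment and so does not affect the integration.
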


\begin{proof}
We first see directly why normalization does not lose any information.
The function $g$ is an odd increasing homeomorphism of $\R$, and
$g(1)=\cosh(1)$. Given $y\in S_{\ell_2^n}$, define
\[
 \Phi_n^{-1}(y)_i
 =g^{-1}\!\left(\cosh(1)\,\frac{y_i}{\|y\|_\infty}\right).
\]
The largest absolute value among the arguments of $g^{-1}$ is
$\cosh(1)=g(1)$, so this formula produces a point of
$S_{\ell_\infty^n}$. Moreover,
\[
 g\bigl(\Phi_n^{-1}(y)_i\bigr)
 =\cosh(1)\,\frac{y_i}{\|y\|_\infty}.
\]
The vector on the right has Euclidean norm
$\cosh(1)/\|y\|_\infty$, and normalization therefore returns $y$.
Thus the displayed formula is indeed the
inverse of $\Phi_n$; it also makes its continuity clear.

It remains to estimate the Lipschitz constant. The two pieces of $g$
were chosen so that, away from their joining points,
\[
 \frac{g'(t)^2}{(a+1)^2}-g(t)^2=
 \begin{cases}
  (n-1)^{-1},&|t|<\dfrac{a}{a+1},\\[4pt]
  -1,&|t|>\dfrac{a}{a+1}.
 \end{cases}
\]
Now suppose that $g$ is differentiable at every coordinate of $z$ and
$\|z\|_\infty>a/(a+1)$. At least one coordinate then contributes $-1$
to the sum of the preceding identities, while each of the remaining
coordinates contributes at most $(n-1)^{-1}$. The negative term
therefore cancels all the positive ones, and we obtain
\[
 \sum_i g'(z_i)^2\le(a+1)^2\sum_i g(z_i)^2.
\]
Euclidean normalization at a nonzero vector $w$ has derivative of norm
$1/\|w\|_2$. Consequently, for every $v\in\R^n$,
\[
 \|D\Phi_n(z)v\|_2
 \le
 \frac{\bigl(\sum_i g'(z_i)^2v_i^2\bigr)^{1/2}}
      {\bigl(\sum_i g(z_i)^2\bigr)^{1/2}}
 \le(a+1)\|v\|_\infty.
\]
Thus $\|D\Phi_n(z)\|\le a+1$, with the derivative viewed from
$\ell_\infty^n$ to $\ell_2^n$.

We finish by carrying this differential estimate back to the sphere.
For $x,y\in S_{\ell_\infty^n}$, every point of the segment joining
them satisfies
\[
 \|(1-t)x+ty\|_\infty
 \ge\max\{1-t\|x-y\|_\infty,
           1-(1-t)\|x-y\|_\infty\}
 \ge1-\frac12\|x-y\|_\infty.
\]
If $\|x-y\|_\infty<2/(a+1)$, the whole segment lies in
\[
 \left\{z:\|z\|_\infty>\frac{a}{a+1}\right\}.
\]
Along this segment, $\Phi_n$ is absolutely continuous and the preceding
derivative estimate holds almost everywhere: a nonconstant affine
coordinate meets either joining value only once, while a coordinate
that stays at a joining value has zero derivative. Integration gives
\[
 \|\Phi_n(x)-\Phi_n(y)\|_2\le(a+1)\|x-y\|_\infty.
\]
When $\|x-y\|_\infty\ge2/(a+1)$, the diameter of the Euclidean sphere
gives the same conclusion:
\[
 \|\Phi_n(x)-\Phi_n(y)\|_2\le2
 \le(a+1)\|x-y\|_\infty.
\]
Since $a+1=1+\arsinh\!\bigl(\sqrt{n-1}\bigr)$, the proof is complete.
\end{proof}

\subsection{Stabilization by a circle}

We now pass from homeomorphisms to stabilized maps of nonzero degree.
A single parameter circle suffices for the upper bound.

\begin{lemma}\label{lem:circle}
Let $n\ge2$ and let $h:S_{\ell_\infty^n}\to S_{\ell_2^n}$ be a
Lipschitz homeomorphism.
For every $r>0$ there is a map
$G_r:S_{\ell_\infty^n}\times S_{\ell_2^2}(r)\to S_{\ell_2^{n+1}}$
with $|\deg G_r|=1$ and
\begin{equation}\label{eq:circle-bound}
 \Lip(G_r)\le\Lip(h)+\frac{\pi}{2r}.
\end{equation}
Its Lipschitz constant in the first variable is exactly $\Lip(h)$.
\end{lemma}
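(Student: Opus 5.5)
The plan is to build $G_r$ from $h$ and one fixed continuous map $\Sigma_w:S_{\ell_2^n}\to S_{\ell_2^{n+1}}$ that depends on the circle point. Identify $\ell_2^{n+1}=\R^{n-1}\times\mathbb C$, write points of $S_{\ell_2^n}$ as $y=(y',y_n)$ with $y'\in\R^{n-1}$, and write $w\in S_{\ell_2^2}(r)$ as $w=re^{i\theta}$. Define
\[
 \Sigma_w(y)=\begin{cases}(y',\,y_ne^{i\theta}),&y_n\ge0,\\[2pt] (y',\,y_n),&y_n<0,\end{cases}
 \qquad G_r(x,w)=\Sigma_w\bigl(h(x)\bigr).
\]
This map is well defined and continuous, because the two formulas agree on $\{y_n=0\}$. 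It takes values in $S_{\ell_2^{n+1}}$, since $|y'|_2^2+|y_n|^2=1$.

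\emph{Lipschitz estimates.} For fixed $w$, I will show that $\Sigma_w$ is $1$-Lipschitz on $S_{\ell_2^n}$. On each half $\{y_n\ge0\}$ and $\{y_n<0\}$ it is an isometry. For $y,\tilde y$ on opposite sides, with $a=y_n\ge0>b=\tilde y_n$, we have $|ae^{i\theta}-b|\le a+|b|=|a-b|$. Hence $\Lip(G_r(\cdot,w))\le\Lip(h)$. At $w=r$ (that is, $\theta=0$) the map $\Sigma_w$ is the inclusion $y\mapsto(y',y_n)$, so equality holds and the first-variable constant is exactly $\Lip(h)$. For fixed $x$, the circle variable enters only through $y_ne^{i\theta}$ with $0\le y_n\le1$. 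If $w=re^{i\theta}$ and $w'=re^{i\theta'}$, then
\[
|y_ne^{i\theta}-y_ne^{i\theta'}|\le|e^{i\theta}-e^{i\theta'}|=|w-w'|_2/r,
\]
so the map is $1/r$-Lipschitz in $w$. The triangle inequality in the maximum product metric then gives $\Lip(G_r)\le\Lip(h)+1/r\le\Lip(h)+\pi/(2r)$.

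\emph{Degree.} This is the step I expect to be the main obstacle. A more symmetric choice such as $(y',y_ne^{i\theta})$ on the whole sphere has two preimage branches with cancelling orientations, and so has degree $0$. The asymmetric definition above removes the second branch. To see this, take a target point $q=(v,z)$ with $z\notin(-\infty,0]$. A preimage cannot come from the half $y_n<0$, since that half gives $z=y_n<0$. It also cannot come from $y_n=0$, since that gives $z=0$. So it must satisfy $y_n=|z|>0$, $y'=v$ and $e^{i\theta}=z/|z|$. Because $h$ is a homeomorphism, this gives exactly one point $(x,w)$.

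Near that preimage, $G_r$ is the composition of $h\times\mathrm{id}$ with the chart map $(y,\theta)\mapsto(y',y_ne^{i\theta})$ on $\{y_n>0\}\times S^1$. That chart map is a homeomorphism onto the open set $\{z\ne0\}\cap S_{\ell_2^{n+1}}$. Thus $G_r$ restricts to a homeomorphism from an open neighbourhood of the unique preimage onto an open set containing $q$, so its local degree there is $\pm1$. Since $q$ has no other preimages, the local-degree formula \cite[Section~2.2]{Hatcher} gives $|\deg G_r|=1$.

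Orientations affect only the sign of the degree, which matches the statement $|\deg G_r|=1$. Should any difficulty arise with local degree for merely Lipschitz maps, I would fall back on the homology computation: $G_r$ collapses the complement of this open set, and the collapse-to-a-point description gives the degree directly.
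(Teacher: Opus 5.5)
Your proof is correct, but it uses a different map from the paper's.

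\textbf{The paper's construction.} The paper suspends $h$ over one semicircle of the parameter circle, $G_r(x,r(\cos t,\sin t))=(\sin t\,h(x),\cos t)$ for $0\le t\le\pi$. It closes the map over the other semicircle with the $x$-independent path $(-\sin t\,x_0,\cos t)$.

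\textbf{Your construction.} You instead rotate the upper half of the image sphere through the complex last coordinate and leave the lower half fixed. Topologically this is the join decomposition $S^n=S^{n-2}*S^1$, with the lower hemisphere collapsed onto a measure-zero set.

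\textbf{Degree.} Both degree arguments are the same count. There is a single preimage of a suitable target point, the map is a local homeomorphism there, and so the local degree is $\pm1$.

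\textbf{Metric estimates.} This is where the two routes differ. In the paper, the first-variable bound is immediate from $|\sin t|\le1$. You need the extra check that the fold $\Sigma_w$ is $1$-Lipschitz across the equator $\{y_n=0\}$, and your inequality $|ae^{i\theta}-b|\le a-b$ handles this correctly. In return, your circle parameter enters only as a rotation of one coordinate of modulus at most one. This gives the chordal bound $1/r$ directly. The paper's map has unit angular speed, so it loses the chord--arc factor and obtains only $\pi/(2r)$. Your construction therefore yields the slightly sharper $\Lip(G_r)\le\Lip(h)+1/r$, which is more than the lemma requires. Exactness of the first-variable constant comes from an isometric slice in both cases: $t=\pi/2$ in the paper, $\theta=0$ for you.

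\textbf{A minor remark.} Your fallback comment about Lipschitz maps is unnecessary. The local-degree formula holds for arbitrary continuous maps between closed connected oriented manifolds of the same dimension, and this is exactly what the paper also invokes.
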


\begin{proof}
Fix $x_0\in S_{\ell_2^n}$. For $0\le t\le2\pi$, define
\[
 G_r\bigl(x,r(\cos t,\sin t)\bigr)=
 \begin{cases}
  (\sin t\,h(x),\cos t),&0\le t\le\pi,\\
  (-\sin t\,x_0,\cos t),&\pi\le t\le2\pi.
 \end{cases}
\]
The two formulas agree when $t=\pi$, and they take the same value at
$t=0$ and $t=2\pi$. Since both have Euclidean norm one, they define a
continuous map into $S_{\ell_2^{n+1}}$.

The second semicircle closes the map without cancelling the degree
carried by $h$. To see this, choose
$z\in S_{\ell_2^n}\setminus\{x_0\}$ and consider the equatorial point
$(z,0)$. Its last coordinate forces any preimage to lie over
$t=\pi/2$ or $t=3\pi/2$. The latter point maps to $(x_0,0)$, while over
the former the map is $(x,r(0,1))\mapsto(h(x),0)$. Thus $(z,0)$ has
the unique preimage
\[
 \bigl(h^{-1}(z),r(0,1)\bigr).
\]
Near this preimage, $\cos t$ is a local coordinate in the last
component, and $h$ is a homeomorphism in the other components. The
local degree is therefore $\pm1$. Since there are no other preimages,
the local-degree formula gives $|\deg G_r|=1$.

The metric control is built into the same formula. When $t$ is fixed,
the dependence on $x$ is multiplied by $|\sin t|$, so its Lipschitz
constant is at most $\Lip(h)$. At $t=\pi/2$ we recover
$x\mapsto(h(x),0)$, and hence the constant in the first variable is
exactly $\Lip(h)$.

Now fix $x$ and move along the parameter circle. On each semicircle
the image curve has speed one in the angular variable, and the two
pieces meet continuously. If $u,u'\in S_{\ell_2^2}(r)$, following the
shorter arc from $u$ to $u'$ changes the angular variable by its arc
length divided by $r$. Since that length is at most
$\frac\pi2\|u-u'\|_2$, we obtain
\[
 \|G_r(x,u)-G_r(x,u')\|_2
 \le\frac\pi{2r}\|u-u'\|_2.
\]
Finally, changing the sphere variable and then the circle variable
yields
\[
 \|G_r(x,u)-G_r(x',u')\|_2
 \le\Lip(h)\|x-x'\|_\infty+\frac\pi{2r}\|u-u'\|_2,
\]
which proves \eqref{eq:circle-bound} for the maximum product distance.
\end{proof}

\subsection{Proof of the sharp asymptotic theorem}

We now combine the lower bound with the two constructions.

\begin{proof}[Proof of \Cref{thm:gromov}]
The homeomorphisms constructed in \Cref{lem:upper-map} satisfy
\begin{equation}\label{eq:matching-upper}
 \Lip(\Phi_n)\le1+\arsinh\!\bigl(\sqrt{n-1}\bigr)
 =\tfrac12\log n+O(1).
\end{equation}
Since every homeomorphism between oriented spheres has degree $\pm1$,
this gives the required upper bound for both $\lambda_n$ and $\mu_n$.
Applying \Cref{lem:circle} with $h=\Phi_n$ gives, for every $r>0$,
\[
 \mu_n^{\mathrm{st}}
 \le\Lip(\Phi_n)+\frac\pi{2r}.
\]
Letting $r\to\infty$ and using \eqref{eq:matching-upper}, we obtain
\[
 \limsup_{n\to\infty}
 \frac{\max\{\lambda_n,\mu_n,\mu_n^{\mathrm{st}}\}}{\log n}
 \le\frac12.
\]

The lower-bound theorem from the preceding section now fits all three
quantities. Its finite estimate depends only on $n$ and $R$, not on
the map or the parameter manifold. When $Y$ is a point, we may take
the infimum over all nonzero-degree maps and obtain
$\liminf_{n\to\infty}\mu_n/\log n\ge1/2$. Since
$\mu_n\le\lambda_n$, the same lower bound follows for $\lambda_n$.
For stabilized maps, the same uniformity allows us to take the
infimum first over the maps and then over $m$ and $r$, which gives
$\liminf_{n\to\infty}\mu_n^{\mathrm{st}}/\log n\ge1/2$. This proves
\eqref{eq:sharp-gromov}.

If $r>0$ is fixed, the circle construction differs from
\eqref{eq:matching-upper} only by the constant $\pi/(2r)$. Together
with the same lower bound, this proves the third assertion of the
theorem. Finally, the upper estimate also holds for the Euclidean
product distance, since that distance dominates the maximum product
distance.
\end{proof}

\section{Johnson's ball problem}\label{sec:johnson-ball}

We now return to \Cref{prob:johnson}. The lower bound is already
contained in the sphere theorem. Indeed, every homeomorphism
$g:B_{\ell_\infty^n}\to B_{\ell_2^n}$ maps the interior onto the
interior and hence the boundary onto the boundary; this follows by
applying invariance of domain to $g$ and to $g^{-1}$. Its restriction
to $S_{\ell_\infty^n}$ is therefore a homeomorphism onto
$S_{\ell_2^n}$ with no larger Lipschitz constant. Consequently,
\begin{equation}\label{eq:lambda-C}
 \lambda_n\le C_n.
\end{equation}

The matching upper bound requires control in the interior of the
cube. It is provided by the following modification of the
coordinatewise hyperbolic construction.

\begin{lemma}\label{lem:ball-upper}
For every $n\ge2$, there is an explicit homeomorphism
$g_n:B_{\ell_\infty^n}\to B_{\ell_2^n}$ such that
\begin{equation}\label{eq:ball-upper}
 \Lip(g_n)\le 1+\arsinh\sqrt{n-1}
 +\left(1+\frac{n}{2(n-1)}\right)
 \sqrt{1+\arsinh\sqrt{n-1}}.
\end{equation}
In particular,
\[
 \Lip(g_n)\le\frac12\log n+O(\sqrt{\log n}).
\]
\end{lemma}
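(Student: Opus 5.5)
The plan is to build $g_n$ as a radial modification of the sphere homeomorphism $\Phi_n$ from \Cref{lem:upper-map}, using the fact that $\Phi_n$ is defined on all of $\R^n\setminus\{0\}$. Write $L=1+\arsinh\sqrt{n-1}=a+1$ and $s=\|z\|_\infty$. I will look for $g_n$ of the form
\[
 g_n(z)=\varphi(s)\,\Phi_n\bigl(\lambda(s)z\bigr),\qquad g_n(0)=0,
\]
with $\varphi:[0,1]\to[0,1]$ increasing, $\varphi(0)=0$, $\varphi(1)=1$, and a scalar dilation $\lambda(s)\ge1$ with $\lambda(1)=1$.

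The guiding constraint is that the derivative bound $\|D\Phi_n(w)\|_{\ell_\infty^n\to\ell_2^n}\le L$, proved in \Cref{lem:upper-map}, holds only when $\|w\|_\infty>a/(a+1)=1-1/L$. I will therefore require
\[
 \lambda(s)s\ge 1-\tfrac1L \quad\text{wherever } \varphi(s)>0.
\]
A pure radial extension, $\lambda(s)=1/s$, loses a factor $2$, as in \Cref{lem:radial}. Instead I keep $\lambda\equiv1$ on an outer shell $1-\delta\le s\le1$ and let $\lambda$ take over only in the core.

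The bijectivity step should be routine. Each level set $\{s=\mathrm{const}\}$ is a scaled copy of $S_{\ell_\infty^n}$. On that level set, $z\mapsto\Phi_n(\lambda(s)z)$ is a homeomorphism onto $S_{\ell_2^n}$: the inverse formula in \Cref{lem:upper-map} works verbatim after scaling, because $\Phi_n(w)$ depends only on the ray through $(g(w_i))_i$ and $g$ is an odd increasing homeomorphism of $\R$. Combined with strict monotonicity of $\varphi$, this shows that $g_n$ maps each cube level onto the Euclidean sphere of radius $\varphi(s)$ bijectively. Continuity of the inverse then follows from compactness.

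For the Lipschitz estimate I will bound the derivative almost everywhere along segments, exactly as in \Cref{lem:upper-map}, and then integrate. Differentiating gives three contributions:
\[
 \|Dg_n(z)\|\le \varphi(s)\lambda(s)L+|\varphi'(s)|+\varphi(s)|\lambda'(s)|\,s\,L.
\]
Here I use that $|\Phi_n|_2=1$, that $s\mapsto\|z\|_\infty$ is $1$-Lipschitz, and that $\|z\|_\infty=s$. On the outer shell $\lambda\equiv1$, so the bound is $\varphi L+\varphi'$. This forces $\varphi'=O(\sqrt L)$ there, with $\varphi$ only reaching $1-O(\delta\sqrt L)$ at the inner edge, rather than a linear ramp of slope $1/\delta$. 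In the core I take $\varphi(s)=cs$ with $\lambda(s)=(1-\delta)/s$, so $g_n$ is positively homogeneous there. For the core I will not use \Cref{lem:radial}. Instead I split the difference of $g_n$ at two points into a radial part and an angular part, as in the proof of that lemma, but estimate the angular part using the sharper derivative bound for $\Phi_n$ on the shell $\|w\|_\infty\ge1-1/L$ rather than the chord bound. The radial constant $c$ contributes at most $1$, and this is where the $1$ in the coefficient $\bigl(1+\tfrac{n}{2(n-1)}\bigr)$ should come from.

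The main obstacle is balancing the shell thickness $\delta$ against the angular cross term. I expect the optimal choice to be $\delta\asymp1/\sqrt L$: the shell ramp then costs $\sqrt L$, while the interior angular loss is $L\cdot O(\delta)\cdot L/L=O(\sqrt L)$. The factor $\tfrac{n}{2(n-1)}$ should come from the exact identity $g'^2/L^2-g^2=(n-1)^{-1}$ on the inner piece of $g$, which controls $\|D\Phi_n\|$ slightly inside the critical level. I would first fix $\delta=1/\sqrt L$ and $c$ with $c(1-\delta)=\varphi(1-\delta)$, and then verify that each of the three terms is at most $L+\bigl(1+\tfrac{n}{2(n-1)}\bigr)\sqrt L$.
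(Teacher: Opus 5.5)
Your construction breaks at the choice $\delta\asymp1/\sqrt L$. You note that $\|D\Phi_n(w)\|\le L$ holds only for $\|w\|_\infty>1-1/L$, and you impose $\lambda(s)s\ge1-1/L$. But with $\lambda\equiv1$ on $[1-\delta,1]$ and $\lambda(s)s=1-\delta$ on the core, that constraint forces $\delta\le1/L$, so $\delta=1/\sqrt L$ violates it. Below the level $1-1/L$, $\Phi_n$ is exponentially worse, not slightly worse: for $0<s<1-1/L$ and $v=(0,1,\dots,1)$,
\[
 \|D\Phi_n(se_1)v\|_2=\frac{g'(0)\sqrt{n-1}}{g(s)}=\frac{L\sinh a}{\sinh(Ls)},
\]
which is roughly $Le^{k-1}$ at $s=1-k/L$. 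Moving from $se_1$ in the direction $v$ leaves $\|z\|_\infty$ unchanged, so on your shell $\partial_vg_n(se_1)=\varphi(s)D\Phi_n(se_1)v$. You keep $\varphi(1)=1$ and $|\varphi'|=O(\sqrt L)$, so for some $c\in(0,1]$ you have $\varphi\ge\frac12$ at $s=1-c/\sqrt L$. Hence $\Lip(g_n)\ge\frac12L\sinh a/\sinh(L-c\sqrt L)$, which is of order $Le^{c\sqrt L}$. The core evaluates $\Phi_n$ on the level $1-\delta$ and inherits the same blow-up unless $\varphi(1-\delta)$ is exponentially small.

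The cone core has a second problem even if $\delta\le1/L$. The factor two in \Cref{lem:radial} comes from $\|v-v_1u\|_\infty\le2$ for the tangential vector, not from using chords, and it is really present for $\Phi_n$. For large $n$, take $w=(1-\frac1L,w_2,0,\dots,0)$ with $g(w_2)^2=\frac35$ and $v=(-1,1,\dots,1)$; then $\|D\Phi_n(w)(v-v_1u)\|_2\approx\frac54L$. So keeping $\varphi(1-\delta)$ close to $1$, as you plan, costs a constant factor.

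The missing idea is to move the critical level inward by raising the hyperbolic rate. In your notation this means dropping $\lambda(1)=1$. Put $L_n=L+\sqrt L$; then $\alpha_n(t)=g(\frac{L_n}{L}t)$, so the paper's map has direction $\alpha_n(x)/|\alpha_n(x)|_2=\Phi_n(\frac{L_n}{L}x)$. Its critical level is $\rho_n=1-1/\sqrt L$, so your shell width $1/\sqrt L$ is exactly right, and its derivative bound on the shell is $L_n$. The extra $\sqrt L$ in $L_n$ is the ``$1$'' in $1+\frac n{2(n-1)}$, not your radial constant.

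The paper's interior is also not a cone. It divides by $\bigl(|\alpha_n(x)|_2^2+b_n(\|x\|_\infty)\bigr)^{1/2}$ with $b_n\equiv\frac n{n-1}$ on $[0,\rho_n]$. Summing $\alpha_n'^2=L_n^2(\alpha_n^2+\frac1{n-1})$ over the $n$ coordinates then gives exactly $\|Dg_n\|\le L_n$ there. On the shell, $b_n$ decreases linearly to $0$, at cost $\frac12|b_n'|=\frac n{2(n-1)}\sqrt L$; this, not $\Phi_n$ inside its critical level, is where $\frac n{2(n-1)}$ comes from.

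Your ansatz can also be repaired. Take $\lambda\equiv L_n/L$, keep a cone core, and let $\varphi$ drop to $\frac12$ at $\rho_n$ to absorb the factor two; this also gives $L_n+O(\sqrt L)$. As proposed, however, the construction does not work.
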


\begin{proof}
Set
\begin{align*}
 L_n&=1+\arsinh\sqrt{n-1}
      +\sqrt{1+\arsinh\sqrt{n-1}},\\
 \rho_n&=\frac{\arsinh\sqrt{n-1}}{L_n}
      =1-\frac1{\sqrt{1+\arsinh\sqrt{n-1}}}.
\end{align*}
Thus \(0<\rho_n<1\) and
\(L_n\rho_n=\arsinh\sqrt{n-1}\).
Define an odd, continuous, strictly increasing function
\(\alpha_n:[-1,1]\to\R\) by
\begin{equation}\label{eq:ball-g}
 \alpha_n(t)=
 \begin{cases}
  \displaystyle\frac{\sinh(L_nt)}{\sqrt{n-1}},
      &0\le t\le\rho_n,\\[6pt]
  \cosh\bigl(L_n(t-\rho_n)\bigr),
      &\rho_n\le t\le1,
 \end{cases}
 \qquad \alpha_n(-t)=-\alpha_n(t).
\end{equation}
The two formulas agree at \(\rho_n\), where both equal one. Define
\begin{equation}\label{eq:ball-b}
 b_n(s)=
 \begin{cases}
  \displaystyle\frac n{n-1},&0\le s\le\rho_n,\\[6pt]
  \displaystyle\frac n{n-1}\frac{1-s}{1-\rho_n},
      &\rho_n\le s\le1.
 \end{cases}
\end{equation}
Thus \(b_n\) is continuous and nonincreasing, positive on
\([0,1)\), and \(b_n(1)=0\). For
\(x\in B_{\ell_\infty^n}\), define
\begin{equation}\label{eq:ball-map}
 g_n(x)=
 \frac{(\alpha_n(x_1),\ldots,\alpha_n(x_n))}
 {\sqrt{\sum_{i=1}^n\alpha_n(x_i)^2+b_n(\|x\|_\infty)}}.
\end{equation}
If \(\|x\|_\infty\le\rho_n\), the square of the denominator is at
least \(n/(n-1)\). If \(\|x\|_\infty\ge\rho_n\), one coordinate has
absolute value \(\|x\|_\infty\), and hence
\(\sum_i\alpha_n(x_i)^2\ge1\). Thus \eqref{eq:ball-map} is well
defined. Since \(b_n\) is positive on \([0,1)\) and vanishes at \(1\),
interior points map into the open Euclidean ball and boundary points
map into its unit sphere.

We next prove that \(g_n\) is a homeomorphism. Fix
\(y\in S_{\ell_2^n}\) and, for
\(0\le t\le\alpha_n(1)/\|y\|_\infty\), set
\[
 x_y(t)=\bigl(\alpha_n^{-1}(ty_1),\ldots,
              \alpha_n^{-1}(ty_n)\bigr).
\]
Then
\(\|x_y(t)\|_\infty=\alpha_n^{-1}(t\|y\|_\infty)\) increases
continuously from zero to one, and
\[
 g_n(x_y(t))
 =\frac{t}{\sqrt{t^2+b_n(\|x_y(t)\|_\infty)}}\,y.
\]
The scalar coefficient on the right increases continuously from zero
to one. It is strictly increasing because, for \(0<s<t\) in the above
interval,
\[
 \frac{b_n(\|x_y(t)\|_\infty)}{t^2}
 \le\frac{b_n(\|x_y(s)\|_\infty)}{t^2}
 <\frac{b_n(\|x_y(s)\|_\infty)}{s^2}.
\]
Thus every point of the radial segment \([0,y]\) has a unique
preimage. Since \(y\) was arbitrary and zero is the unique preimage of
the origin, \(g_n\) is a continuous bijection from the compact space
\(B_{\ell_\infty^n}\) onto the Hausdorff space
\(B_{\ell_2^n}\), and hence is a homeomorphism.

It remains to estimate its Lipschitz constant. For
\(T(w,b)=w/\sqrt{\|w\|_2^2+b}\), differentiation with
respect to \(w\) and \(b\), respectively, gives
\begin{equation}\label{eq:ball-T}
 \|D_wT(w,b)\|_{2\to2}\le
 \frac1{\sqrt{\|w\|_2^2+b}},
 \qquad
 \|\partial_bT(w,b)\|_2
 =\frac{\|w\|_2}{2(\|w\|_2^2+b)^{3/2}}.
\end{equation}
For \(\|x\|_\infty<\rho_n\), we have
\[
 \alpha_n'(x_i)^2
 =L_n^2\left(\alpha_n(x_i)^2+\frac1{n-1}\right),
 \qquad 1\le i\le n.
\]
Since \(b_n(\|x\|_\infty)=n/(n-1)\), summing over \(i\) and using
\eqref{eq:ball-T} yields
\begin{equation}\label{eq:ball-inner}
 \|Dg_n(x)v\|_2\le L_n\|v\|_\infty.
\end{equation}

For \(|t|<1\) and \(|t|\ne\rho_n\),
\[
 \frac{\alpha_n'(t)^2}{L_n^2}-\alpha_n(t)^2=
 \begin{cases}
  (n-1)^{-1},&|t|<\rho_n,\\
  -1,&|t|>\rho_n.
 \end{cases}
\]
Consequently, for almost every \(x\) with
\(\rho_n<\|x\|_\infty<1\), at least one coordinate contributes
\(-1\), and hence
\begin{equation}\label{eq:ball-outer-sum}
 \sum_{i=1}^n\alpha_n'(x_i)^2
 \le L_n^2\sum_{i=1}^n\alpha_n(x_i)^2.
\end{equation}
For \(\rho_n<\|x\|_\infty<1\), we also have
\[
 \sum_{i=1}^n\alpha_n(x_i)^2\ge1,
 \qquad
 |b_n'(\|x\|_\infty)|
 =\frac{n}{n-1}\sqrt{1+\arsinh\sqrt{n-1}}.
\]
Moreover, the derivative of \(x\mapsto\|x\|_\infty\), whenever it
exists, has norm at most one. The chain rule,
\eqref{eq:ball-T}, and \eqref{eq:ball-outer-sum} give, for almost every
\(x\) with \(\rho_n<\|x\|_\infty<1\),
\begin{equation}\label{eq:ball-outer}
 \|Dg_n(x)v\|_2
 \le\left(L_n+\frac{n}{2(n-1)}
 \sqrt{1+\arsinh\sqrt{n-1}}\right)\|v\|_\infty.
\end{equation}
Indeed, the factor multiplying \(|b_n'(\|x\|_\infty)|/2\) is at most
one, since
\[
 \frac{\sqrt{\sum_i\alpha_n(x_i)^2}}
 {\left(\sum_i\alpha_n(x_i)^2+
 b_n(\|x\|_\infty)\right)^{3/2}}\le1.
\]
The exceptional sets have Lebesgue measure zero. Since \(g_n\) is
locally Lipschitz, \eqref{eq:ball-inner} and \eqref{eq:ball-outer},
followed by mollification on the open cube and integration along line
segments, yield
\[
 \Lip(g_n)\le L_n+\frac{n}{2(n-1)}
 \sqrt{1+\arsinh\sqrt{n-1}}.
\]
The estimate extends to the closed cube by continuity and is precisely
\eqref{eq:ball-upper}. Finally,
\(\arsinh\sqrt{n-1}=\frac12\log n+O(1)\), which gives the asserted
asymptotic estimate.
\end{proof}

The restriction of $g_n$ to $S_{\ell_\infty^n}$ is also an explicit
homeomorphism onto $S_{\ell_2^n}$. Indeed, $b_n(1)=0$, so this
restriction is obtained by applying $\alpha_n$ coordinatewise and then
normalizing in the Euclidean norm. The derivative estimate
\eqref{eq:ball-outer-sum} and the line-segment argument from
\Cref{lem:upper-map} give
\[
 \Lip\bigl(g_n|_{S_{\ell_\infty^n}}\bigr)\le L_n
 =1+\arsinh\sqrt{n-1}
  +\sqrt{1+\arsinh\sqrt{n-1}}.
\]
Thus the ball construction also gives the optimal leading constant
$1/2$ in the sphere problem. We retain the construction in
\Cref{lem:upper-map} because it gives the sharper bound
$1+\arsinh\sqrt{n-1}$.

\begin{proof}[Proof of \Cref{thm:johnson-ball}]
As observed above, restriction to the boundary gives
\[
 \lambda_n\le C_n.
\]
On the other hand, \Cref{lem:ball-upper} gives
\[
 C_n\le 1+\arsinh\sqrt{n-1}
 +\left(1+\frac{n}{2(n-1)}\right)
 \sqrt{1+\arsinh\sqrt{n-1}}.
\]
Since \Cref{thm:gromov} gives
$\lambda_n/\log n\to1/2$ and
$\arsinh\sqrt{n-1}=\frac12\log n+O(1)$, these two estimates yield
\[
 \lim_{n\to\infty}\frac{C_n}{\log n}=\frac12.
\]
\end{proof}

\section*{Acknowledgements}

The authors are particularly grateful to Professors Guoliang Yu and Chunlan Jiang
for their longstanding interest in and support of their work on Property~$(H)$.

\end{document}